\documentclass[a4paper,10pt,reqno]{amsart}

\usepackage[utf8]{inputenc}
\usepackage[T1]{fontenc}
\usepackage{amsthm}
\usepackage{amsmath}
\usepackage{amssymb}
\usepackage[inline]{enumitem}
\usepackage{comment}
\usepackage{hyperref}
\usepackage{fancyhdr}
\usepackage{mathrsfs}
\usepackage{stmaryrd}
\usepackage[normalem]{ulem}
\usepackage{xcolor}
\usepackage{nicefrac}
\usepackage{tikz}

\theoremstyle{definition}

\newtheorem{theorem}{Theorem}[section]
\newtheorem{lemma}[theorem]{Lemma}
\newtheorem{proposition}[theorem]{Proposition}
\newtheorem{corollary}[theorem]{Corollary}

\theoremstyle{definition}

\newtheorem{remark}[theorem]{Remark}

\providecommand\Mon{{\mathcal{M}{\rm on}}}

\DeclareMathOperator{\mdeg}{\mathsf {m-deg}}

\title{On atoms and their density in the monoid of monomial ideals}

\author{Nikola Bogdanovic}
\address{(N. Bogdanovic) Department of Mathematics and Scientific Computing, University of Graz | Heinrichstrasse 36/III, 8010 Graz, Austria}
\email{nikola.bogdanovic@uni-graz.at}

\author{Laura Cossu}
\address{(L. Cossu) Department of Mathematics and Computer Science, University of Cagliari | Via Ospedale 72, 09124 Cagliari, Italy}
\email{laura.cossu3@unica.it}

\subjclass[2020]{Primary: 13A15, 20M13. Secondary: 13F20, 13B25, 20M12}

\keywords{Monoids of ideals, polynomial ideal theory, monomial ideals, atoms, density of atoms, sets of lengths}

\thanks{This research was funded in part by the Austrian Science Fund (FWF) through the project 10.55776/DOC183. N.~B.~acknowledges financial support from the Department of Economy, Tourism, Science and Research of the Federal Province of Styria (Austria), as well as the hospitality of the Department of Mathematics and Computer Science of the University of Cagliari, where he spent the winter semester 2025/2026 and where this work was initiated and took shape.  L.~C.~acknowledges support from the GNSAGA research group of the Italian National Institute of High Mathematics (INdAM)}

\begin{document}
\begin{abstract}
We study the structure and distribution of atoms in the monoid $\Mon(R)$ of nonzero monomial ideals of the polynomial ring $R=K[X_1,\ldots, X_N]$, with $N\ge 2$. We characterize the atoms with at most four minimal generators and introduce the density $d(N,\mu)$ of atoms among monomial ideals with $\mu$ minimal generators. For $\mu\le 4$, we prove the existence of this density and determine its asymptotic behavior as $N\to\infty$. In the bivariate case, we also obtain a uniform lower bound for the density of atoms in a divisor-closed submonoid of $\Mon(R)$ generated by equigenerated ideals.
\end{abstract}
\maketitle

\section{Introduction}
The multiplicative structure of ideals in integral domains is a central object of multiplicative ideal theory. Given a domain, or more generally a cancellative commutative monoid, one can endow suitable families of ideals with a multiplication induced by the ordinary product of ideals. The resulting monoids of ideals (or {\it ideal monoids}) encode substantial information about the underlying algebraic structure and have been studied extensively in the language of ideal systems, star operations, and semigroup theory; see, for instance, \cite{Hal-Ko98,Fo-Ho-Lu13,Rein12}. Recall that a commutative monoid $H$, i.e., a commutative semigroup with identity, is {\it cancellative} if $ab=ac$ implies $b=c$ for all $a,b,c\in H$.

The best-understood examples of ideal monoids arise by restricting attention to classes of ideals for which multiplication has good cancellation and finiteness properties. It is well known that nonzero ideals of a Dedekind domain form a free abelian monoid, with the nonzero prime ideals as a free basis. More generally, for a Krull domain, the monoid of invertible ideals provides a basic example of a Krull monoid, although it is not factorial in general \cite[Section 4]{Ge-Kh22}. 

From the point of view of factorization theory, the focus shifts from classical ideal decompositions to the arithmetic of factorizations into multiplicatively irreducible elements (or {\it atoms}) of the ideal monoid. The main objects of study are then these atoms and the invariants measuring the non-uniqueness of factorization, such as sets of lengths, unions of sets of lengths, and elasticities. This perspective has led to an extensive investigation of the arithmetic of ideal monoids in both cancellative and non-cancellative settings. We refer to the monograph \cite{Ge-HK} for the general background in factorization theory and to \cite{Fan17, Ge-Re19, Ge-Kh22} for results specifically concerned with monoids of ideals.

A substantial part of this literature deals with monoids of divisorial and invertible ideals, a setting in which divisor-theoretic methods and transfer homomorphisms frequently yield precise descriptions of the arithmetic. For example, arithmetical finiteness results for divisorial ideal monoids of weakly Krull Mori domains, including results on sets of lengths and monotone catenary degrees, were established in
\cite{Ge-Re19}. The arithmetic of monoids of $r$-invertible $r$-ideals in Krull and weakly Krull Mori domains, for suitable ideal systems $r$, was studied further in \cite{Ge-Kh22}. Radical ideal factorizations within the framework of multiplicative lattices and finitary ideal systems have also been
considered in \cite{Ol-Re19} and \cite{Ol-Re20}.

Much less is known about the monoid $\mathcal I(R)$ of all nonzero ideals of an integral domain $R$, endowed with ordinary ideal multiplication. This monoid is typically non-cancellative, so many of the standard tools of classical factorization theory are no longer directly available. Factorization theory has also been developed in non-cancellative settings, in particular through the study of commutative rings with zero divisors and the various notions of irreducibility and atomicity arising in this context; see \cite{AnVL96,AnVL97}. Factorization phenomena for ideals of arbitrary commutative rings were subsequently studied from a related perspective in \cite{An-Ju-Mo19}, with particular emphasis on multiplicative irreducibility and the corresponding decomposition properties. Other works in the area include \cite{Jue13, GJNR24}. A more systematic approach to factorization in the broader non-cancellative context is based on the notion of {\it unit-cancellativity}, a weakening of cancellativity. A commutative monoid $H$ is called {\it unit-cancellative} if $a=ab$ implies that $b$ is a unit of $H$, for all $a,b\in H$. A substantial part of the classical theory, together with many of its arithmetic invariants, carries over naturally to this setting; see \cite{Fan17,Ge-Zh20}. Nevertheless, the structure of factorizations in $\mathcal I(R)$ remains poorly understood even for natural classes of domains $R$.

An important case is that of a multivariate polynomial ring $R=K[X_1,\ldots,X_N]$, where $K$ is a field and $N\geq 2$. The study of the arithmetic of $\mathcal I(R)$ was initiated in \cite{Ge-Kh22}, where it was shown that this monoid shares striking factorization-theoretic features with Krull monoids having infinite class group and prime divisors in all classes, despite not being transfer Krull. Further families of atoms and sets of lengths in $\mathcal I(R)$ were studied in \cite{BCK}. These results also highlighted the submonoid $\Mon(R)$ of nonzero monomial ideals as a natural and particularly concrete object of study. In particular, it was shown in \cite[Section 4]{BCK} that $\Mon(R)$ is neither locally finitely generated nor transfer Krull, while exhibiting a rich arithmetic of factorizations, including full elasticity and maximal unions of sets of lengths. Moreover, families of atoms of $\Mon(R)$ were constructed, and sets of lengths were computed for certain classes of ideals in this monoid. A related factorization problem for integrally closed monomial ideals, endowed with the product obtained by taking the integral closure of the ordinary ideal product, was recently studied in \cite{Lew25} using the geometry of Newton polyhedra.

The aim of the present paper is to investigate atoms of $\Mon(R)$, endowed with the usual ideal multiplication, from both a structural and an asymptotic point of view. More precisely, we address the following two questions:

\vspace{.1cm}
\begin{enumerate*}[label=\textup{(\arabic{*})}]
    \item Can atoms of $\Mon(R)$ having a small prescribed number of minimal generators be characterized explicitly?
\end{enumerate*}

\vspace{.1cm}
\begin{enumerate*}[label=\textup{(\arabic{*})}, resume]
    \item How frequent are atoms among monomial ideals of bounded combinatorial complexity?
\end{enumerate*}

\vspace{.1cm}
The two questions are closely connected, since explicit criteria for reducibility make it possible not only to recognize individual atoms, but also to count them and study their asymptotic distribution.

\subsection*{Plan of the paper} In Section~2, we fix the notation and collect some preliminary results. In Section~3, we establish some basic factorization properties of principal monomial ideals, prove that every finite interval $[m,n]$ with $2\le m\le n$ occurs as a set of lengths in $\Mon(R)$, and characterize the atoms with at most four minimal generators. In Section~4, we introduce the density $d(N,\mu)$ of atoms among $N$-variate monomial ideals with $\mu$ minimal generators, and prove its existence and asymptotic behavior for $\mu\le 4$. Finally, in Section~5 we study the divisor-closed submonoid $\mathcal E(R)$ of $\Mon(R)$ generated by the equigenerated monomial ideals whose minimal generators have greatest common divisor $1$. In the bivariate case, we derive a uniform lower bound for the density of atoms in $\mathcal E(R)$.

\section{Basic notation and preliminaries}

Throughout the paper, $\mathbb{N}$ and $\mathbb{Z}$ denote the set of nonnegative integers and the ring of integers, respectively. For $m\in \mathbb N$ and $n\in\mathbb N \cup \{\infty\}$ we let $[ m, n] := \allowbreak \{x\in \mathbb{N} \colon m\leq x\leq n\}$ be the discrete interval from $m$ to $n$.

\medskip

Let $R=K[X_1,\dots,X_N]$, where $K$ is a field and $N\ge 2$. For $a,b\in \mathbb N^N$, define $a\wedge b$ by setting \[(a\wedge b)(i) = \min\{a(i), b(i)\}\quad \text{ for all }i\in [1,N].\]

A monomial $X_1^{a(1)}\dots X_N^{a(N)}\in R$ can be written more compactly as $\underline X^a$, with $a=(a(1),\dots,a(N))\in \mathbb N^N\subseteq \mathbb Z^N$. For $a_1,\ldots, a_k\in\mathbb N^N$, we denote by
\[\langle \underline{X}^{a_1},\ldots,\underline{X}^{a_k}\rangle\]
the monomial ideal generated by the monomials $\underline{X}^{a_1},\ldots,\underline{X}^{a_k}$.
We will often regard $\mathbb Z^N$ as a partially ordered group, where the operation is component-wise addition and the order is the product order of the usual total order on $\mathbb Z$, i.e., $a\le b$ if and only if $a(i)\le b(i)$ for every $i\in[1,N]$. We write $a<b$ if $a\leq b$ and $a\neq b$. Denote by $\underline 0$ the identity element of $\mathbb Z^N$, i.e., the zero vector. Then $\mathbb N^N = \{a\in\mathbb Z^N\colon a\ge \underline 0\}$ is the positive cone of $\mathbb Z^N$. We write $a\perp b$ to indicate that $a,b\in\mathbb Z^N$ are incomparable with respect to the order $\le$, i.e., neither $a\le b$ nor $b\le a$ holds.

The divisibility partial order on the set $\mathcal M(R)$ of monomials of $R$, plays a crucial role in the study of monomial ideals of $R$. Clearly, the map \[\iota\colon (\mathbb N^N,\le) \to (\mathcal M(R),\vert),\] 
sending $a$ to $\underline X^a$, is an isomorphism of partially ordered monoids, where the operation between monomials is the product. For $a_1,\dots, a_k\in\mathbb N^N$,
we have
\[ \underline X^{\bigwedge_{i=1}^k a_i} = \gcd{(\underline X^{a_1}, \dots, \underline X^{a_k})}.  \]

\medskip

Let $\Mon(R)$ be the (unit-cancellative, reduced) monoid of nonzero monomial ideals of $R$. For $I\in\Mon(R)$, we denote by $G(I)$ the unique minimal set of monomial generators of $I$, and by $\mu(I)$ its cardinality. By Dickson's Lemma $G(I)$ is finite and hence $\mu(I)$ is a positive integer. In particular, $I=R$ if and only if $G(I)=\{1\}$ (and so $\mu(I)=1$). Thus, every $I\in \Mon(R)$ with $\mu(I)\ge 2$ is different from $R$. Throughout, whenever we refer to a \lq\lq monomial ideal\rq\rq, we tacitly assume that it is nonzero, i.e., an element of $\Mon(R)$.

The following classical lemma (see, e.g., \cite[Cor.~1.8]{En-Her} for a proof), will be used repeatedly without further comment.
\begin{lemma}\label{lem: mon_gen}
Let $I\in\Mon(R)$, and $\mathcal G$ be a set of monomials in $I$. Then $\mathcal G$ generates $I$ if and only if for each monomial $v\in I$ there exists $u\in\mathcal G$ such that $u\mid v$.
\end{lemma}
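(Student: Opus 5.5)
The plan is to prove the two implications separately. The key fact used in both is that a monomial ideal is spanned, as a $K$-vector space, by the monomials it contains. For a monomial ideal $I$ this reads: a polynomial $f$ lies in $I$ if and only if every monomial occurring in $f$ with nonzero coefficient lies in $I$. I would prove this first. One direction is immediate. For the other, write $f=\sum_j h_j u_j$ with $u_j$ among the monomial generators of $I$ and $h_j\in R$. Expand each $h_j$ into its monomials; then every monomial of $f$ is a monomial multiple of some $u_j$, hence lies in $I$. The same computation shows that a monomial $v$ belongs to an ideal generated by monomials $u_1,\dots,u_k$ only if $v$ appears among the products $w u_j$ with $w$ a monomial. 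Since each such product is a single monomial, $v=w u_j$ for some $j$, and so $u_j\mid v$.

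For the forward direction, suppose $\mathcal G$ generates $I$ and let $v\in I$ be a monomial. By the observation above, $v$ is a monomial multiple of some element $u\in\mathcal G$, so $u\mid v$. The one point needing care is that $\mathcal G$ may be infinite. This is harmless, since any ideal membership $v\in\langle\mathcal G\rangle$ uses only finitely many generators.

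For the converse, assume every monomial $v\in I$ is divisible by some $u\in\mathcal G$. Because $\mathcal G\subseteq I$, we have $\langle\mathcal G\rangle\subseteq I$. For the reverse inclusion, take any $f\in I$. By the key fact, each monomial of $f$ lies in $I$, so it is divisible by some element of $\mathcal G$ and therefore lies in $\langle\mathcal G\rangle$. Then $f$, being a $K$-linear combination of these monomials, also lies in $\langle\mathcal G\rangle$. Hence $I=\langle\mathcal G\rangle$.

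The main obstacle is the key fact itself, namely that membership of a polynomial in a monomial ideal can be tested monomial by monomial. This is handled by the expansion $f=\sum_j h_j u_j$ described above, in which each term $c\,w u_j$ (with $c\in K$ and $w$ a monomial) is a scalar multiple of a monomial lying in $I$. Collecting terms can make some coefficients cancel, but cancellation only removes monomials; it never creates a monomial of $f$ that lies outside $I$. Everything else is routine.
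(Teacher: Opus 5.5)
Your proof is correct. It is the standard textbook argument the paper relies on: the paper gives no proof of its own and cites Ene--Herzog, Cor.~1.8, and that argument rests on the same two facts you prove. These are that a polynomial lies in a monomial ideal if and only if every monomial of its support does, and that a monomial lies in an ideal generated by monomials if and only if one of those generators divides it. Your remark that any single membership $v\in\langle\mathcal G\rangle$ involves only finitely many elements of $\mathcal G$ correctly handles the case of an infinite $\mathcal G$.
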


We will also need the following elementary multiplicative property of the greatest common divisor of the minimal generators of a monomial ideal.
\begin{lemma}\label{lem: gcd IJ}
    For any $I,J\in\Mon(R)$, we have
    \[ \gcd(G(IJ)) = \gcd(G(I))\cdot\gcd(G(J)). \]
\end{lemma}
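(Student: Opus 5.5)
Let $G(I)=\{\underline X^{a_1},\dots,\underline X^{a_k}\}$ and $G(J)=\{\underline X^{b_1},\dots,\underline X^{b_l}\}$. The plan is to show that the set of exponents of $G(IJ)$ has the same coordinatewise minimum as the set of all sums $a_i+b_j$, and then to compute that minimum coordinate by coordinate. Via the isomorphism $\iota$, the lemma then becomes the identity
\[
\bigwedge_{c\in G'} c \;=\; \Bigl(\bigwedge_{i} a_i\Bigr)+\Bigl(\bigwedge_j b_j\Bigr),
\]
where $G'\subseteq\mathbb N^N$ is the set of exponents of $G(IJ)$.

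First, $IJ$ is generated by the monomials $\underline X^{a_i+b_j}$. So, by Lemma~\ref{lem: mon_gen}, the minimal generating set $G(IJ)$ is a subset of $\{\underline X^{a_i+b_j}\}$. Indeed, each minimal generator is divisible by some $\underline X^{a_i+b_j}\in IJ$, and that product is in turn divisible by some minimal generator. Minimality forces equality.

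Second, every $a_i+b_j$ dominates some element of $G'$. Hence the meet over $G'$ equals the meet over all sums $a_i+b_j$. Concretely, $G'\subseteq\{a_i+b_j\}$ gives $\ge$, and domination gives $\le$.

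Finally, compute coordinatewise. For each $t\in[1,N]$,
\[
\min_{i,j}\bigl(a_i(t)+b_j(t)\bigr)=\min_i a_i(t)+\min_j b_j(t),
\]
since the two indices can be chosen independently. Translating back through $\iota$ and the gcd formula gives the claim.

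No step is a real obstacle. The only point needing care is the second step: one must remember that the gcd of the minimal generators coincides with the gcd of any finite generating set. This holds because every generator is divisible by a minimal one, and every minimal one appears among the generators.
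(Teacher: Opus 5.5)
Your proof is correct and is essentially the paper's argument, arranged differently: the paper first factors out the gcds, writing $IJ=\gcd(G(I))\gcd(G(J))\,I'J'$ with $\gcd(G(I'))=\gcd(G(J'))=1$, and this plays the role of your inclusion $G(IJ)\subseteq\{\underline X^{a_i+b_j}\}$ in giving the inequality $\ge$. It then shows $\gcd(G(I'J'))=1$ by taking, for each variable $X_k$, generators of $I'$ and $J'$ not divisible by $X_k$ and a minimal generator of $I'J'$ dividing their product, which is exactly your domination step applied at indices realizing the coordinatewise minimum.
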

\begin{proof}
    Write $I=\gcd(G(I))\cdot I'$ and $J=\gcd(G(J))\cdot J'$, where $I',J'\in\Mon(R)$ and $\gcd(G(I'))=\gcd(G(J'))=1$. Clearly, we have $IJ = \gcd(G(I))\cdot \gcd(G(J))\cdot I'J'$ and $\gcd(G(IJ)) = \gcd(G(I))\cdot \gcd(G(J))\cdot \gcd(G(I'J'))$, so to conclude it suffices to show that $\gcd(G(I'J'))=1$. To achieve this, we prove that, for all $k\in [1,N]$, there is a minimal generator of $I'J'$ which is not divisible by $X_k$. Indeed, as $\gcd(G(I'))=1$ and $\gcd(G(J'))=1$, there are $f_k\in G(I')$ and $g_k\in G(J')$ such that $X_k$ divides neither $f_k$ nor $g_k$. As $f_kg_k\in I'J'$, there exists $h_k\in G(I'J')$ dividing $f_kg_k$, whence $X_k$ does not divide $h_k$.
\end{proof}

For a monomial ideal $I\in \Mon(R)$, we define its {\it min-degree} by:
\[\mdeg{I}=\min\{\deg(u) \; \colon \; u\in G(I)\}.\]
The min-degree is additive with respect to ideal multiplication: for all $I,J\in\Mon(R)$,
\[\mdeg(IJ)=\mdeg(I)+\mdeg(J).\]
Indeed, the ideal $IJ$ is generated by the monomials $uv$, with $u\in G(I)$ and $v\in G(J)$. Hence every element $w\in G(IJ)$ is divisible by some such product $uv$, and therefore $\deg(w)\geq \deg(u)+\deg(v)\geq \mdeg(I)+\mdeg(J)$. Thus,\[\mdeg(IJ)\geq \mdeg(I)+\mdeg(J).\]
Conversely, let $u\in G(I)$ and $v\in G(J)$ have degrees $\mdeg(I)$ and $\mdeg(J)$, respectively. Since $uv\in IJ$, some $w\in G(IJ)$ divides $uv$, and so
\[\mdeg(IJ)\leq \deg(w)\leq \deg(uv)=\mdeg(I)+\mdeg(J).\]

\section{Factorizations, lengths, and atoms in $\Mon(R)$}

Let $R=K[X_1,\ldots, X_N]$, with $K$ a field and $N\ge 2$. In this section, we study atoms of $\Mon(R)$ with a small number of minimal generators. More precisely, we give a complete characterization of atoms $I\in\Mon(R)$ with $\mu(I)\le 4$.

Before turning to this classification, we establish some basic factorization properties of principal monomial ideals and derive from them a realization result for sets of lengths. 

Recall that a \textit{factorization} of $I\in\Mon(R)$ is a formal product $z=A_1\cdots A_k$, such that \[I=A_1\cdots A_k,\]
where $A_1,\ldots,A_k$ are atoms of $\Mon(R)$, that is, proper monomial ideals of $R$ that cannot be written as a product of two proper monomial ideals. The integer $k$ is called the \textit{length} of $z$.
We denote by $\mathsf Z_{\Mon(R)}(I)$ the set of factorizations of $I$, where factorizations differing only by a permutation of the factors are identified. We also allow the empty factorization, which is the unique factorization of $R$ and has length $0$. The \textit{set of lengths} of $I$ is then defined by
\[
\mathsf L_{\Mon(R)}(I):=\{k\in\mathbb N : I \text{ admits a factorization of length } k\}.
\] 
It was proved in \cite[Theorem 4.7]{BCK} that $\Mon(R)$ is a $\mathsf{BF}$-monoid; equivalently, every element of $\Mon(R)$ admits a factorization and its set of lengths is finite.

Finally, recall that a monomial ideal $I$ is called a \textit{strong atom} (or an \textit{absolutely irreducible}) of $\Mon(R)$ if it is an atom and
\[\mathsf Z_{\Mon(R)}(I^n)=\{I^n\} \qquad\text{for every }n\ge 1.\]

\begin{lemma}\label{lem: princ div closed}
    Let $\underline X^a\in \mathcal M(R)$ with $a>\underline 0$ and suppose $\langle \underline X^a\rangle = JK$ for some $J,K\in\Mon(R)\setminus\{R\}$. Then both $J$ and $K$ are generated by a single non-constant monomial. In other words, the submonoid of $\Mon(R)$ consisting of principal monomial ideals is divisor-closed. In particular:
    \begin{enumerate}[label=\textup{(\arabic{*})}, mode=unboxed]
    \item\label{lem: princ div closed 1} $\mathsf Z_{\Mon(R)}(\langle \underline X^a\rangle) = \{ \langle X_1\rangle^{a(1)}\dots\langle X_N\rangle^{a(N)}\}$;
    \item\label{lem: princ div closed 2} for any $i\in [1,N]$, the ideal $\langle X_i\rangle$ is a strong atom of $\Mon(R)$.
    \end{enumerate}
\end{lemma}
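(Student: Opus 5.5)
The plan is to show that if $\langle \underline X^a\rangle = JK$ with $J,K$ proper, then $\mu(J)=\mu(K)=1$. The key observation is that $JK$ contains every product $uv$ with $u\in G(J)$ and $v\in G(K)$. Since $JK$ is principal, each such product is divisible by $\underline X^a$. In the other direction, $\underline X^a\in JK$, so $\underline X^a$ is divisible by some generator $u_0v_0$ of $JK$ with $u_0\in G(J)$ and $v_0\in G(K)$. Hence $\underline X^a = u_0v_0$ exactly: $u_0v_0 \mid \underline X^a$ and $\underline X^a\mid u_0v_0$.

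\textbf{Step 1: uniqueness of generators.} Suppose $u\in G(J)$ is arbitrary. Then $u_0v_0=\underline X^a \mid uv_0$, so $u_0\mid u$ after cancelling the monomial $v_0$. Minimality of $G(J)$ then forces $u=u_0$, so $G(J)=\{u_0\}$. Symmetrically, $G(K)=\{v_0\}$.

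\textbf{Step 2: non-constancy.} Since $J,K\ne R$, neither $u_0$ nor $v_0$ equals $1$, so both generators are non-constant monomials. This proves divisor-closedness of the submonoid of principal monomial ideals. One could also invoke Lemma~\ref{lem: gcd IJ}, but the direct argument already suffices.

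\textbf{Step 3: item~\ref{lem: princ div closed 1}.} Principal monomial ideals form a submonoid isomorphic to $(\mathbb N^N,+)$ via $\langle \underline X^b\rangle\mapsto b$. This monoid is free with basis the unit vectors, whose images are the ideals $\langle X_i\rangle$.
\begin{itemize}
\item By divisor-closedness, every factorization of $\langle\underline X^a\rangle$ in $\Mon(R)$ uses only principal atoms.
\item The atoms of $\mathbb N^N$ are exactly the unit vectors, so the principal atoms are exactly the ideals $\langle X_i\rangle$.
\item Each $\langle X_i\rangle$ is also an atom of $\Mon(R)$ by divisor-closedness.
\end{itemize}
Uniqueness of factorization in the free monoid then gives exactly the factorization $\langle X_1\rangle^{a(1)}\cdots\langle X_N\rangle^{a(N)}$.

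\textbf{Step 4: item~\ref{lem: princ div closed 2}.} Applying Step 3 to $\langle X_i\rangle^n=\langle X_i^n\rangle$ shows that its unique factorization is $\langle X_i\rangle^n$, so $\langle X_i\rangle$ is a strong atom.

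The only delicate point is the cancellation in Step 1. It is legitimate because we cancel monomials in the polynomial ring, not ideals in the non-cancellative monoid $\Mon(R)$.
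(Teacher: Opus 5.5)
Your proof is correct and is essentially the paper's argument. You write $\underline X^a=u_0v_0$ (which the paper assumes without comment as $a=\beta_1+\gamma_1$, and which you justify explicitly), show that any other minimal generator of one factor would be divisible by the fixed one, and then deduce items (1) and (2) from the freeness of the monoid of monomials, exactly as the paper does.
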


\begin{proof}
    Let $G(J) = \{\underline X^{\beta_1}, \dots, \underline X^{\beta_{\mu(J)}}\}$, $G(K) = \{ \underline X^{\gamma_1}, \dots, \underline X^{\gamma_{\mu(K)}}\}$. We claim that $\mu(J)=\mu(K)=1$. Without loss of generality, we can assume $a=\beta_1+\gamma_1$. Suppose $\mu(K) \ge 2$: since $\underline X^{\beta_1+\gamma_2}\in \langle \underline X^a\rangle$, necessarily $a\le \beta_1+\gamma_2$. Combined with the previous equality, we obtain $\gamma_1\le \gamma_2$, implying $\underline X^{\gamma_1} \mid \underline X^{\gamma_2}$, contradicting the minimality of $G(K)$. Hence $\mu(K)=1$. By symmetry, $\mu(J)=1$ as well. Moreover, since $J,K\ne R$, their generators are nonconstant. This proves that the submonoid of $\Mon(R)$ consisting of principal monomial ideals is divisor-closed. Statement \ref{lem: princ div closed 1} follows from the fact that the monoid of monomials $\mathcal M(R)$ is free abelian on $X_1, \ldots,X_N$. Let $i\in [1,N]$ and take $a\in\mathbb N^N$ such that $a(i)=n$ and $a(j)=0$ for all $j\in [1,N]\setminus \{i\}$. It then follows from \ref{lem: princ div closed 1} that $\mathsf Z_{\Mon(R)}(\langle X_i^n\rangle) = \{ \langle X_i\rangle^{n}\}$, i.e., that $\langle X_i\rangle$ is a strong atom of $\Mon(R)$. This proves $\ref{lem: princ div closed 2}$.
    \end{proof}

\begin{proposition}\label{prop: princ factors}
    Let $I\in\Mon(R)$ and $\underline X^a\in \mathcal M(R)$. Then
    \[ \mathsf Z_{\Mon(R)} (\langle \underline X^a\rangle I) = \mathsf Z_{\Mon(R)}(\langle \underline X^a\rangle) \mathsf Z_{\Mon(R)}(I) = \langle X_1\rangle^{a(1)}\dots\langle X_N\rangle^{a(N)} \mathsf Z_{\Mon(R)}(I)\]
\end{proposition}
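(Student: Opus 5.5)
The inclusion ``$\supseteq$'' is immediate. If $z=A_1\cdots A_k$ is a factorization of $I$, then multiplying it by the unique factorization $\langle X_1\rangle^{a(1)}\cdots\langle X_N\rangle^{a(N)}$ of $\langle \underline X^a\rangle$ from Lemma~\ref{lem: princ div closed}\ref{lem: princ div closed 1} gives a factorization of $\langle \underline X^a\rangle I$. The substance is ``$\subseteq$'': every factorization of $\langle \underline X^a\rangle I$ must contain the atoms $\langle X_i\rangle$ with exactly the multiplicities $a(i)$, and the remaining atoms must multiply to $I$. I will argue by induction on $\deg(\underline X^a)=\sum_i a(i)$. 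It therefore suffices to treat $a=e_i$, that is, to prove $\mathsf Z(\langle X_i\rangle I)=\langle X_i\rangle\,\mathsf Z(I)$ and then iterate.

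Fix $i$ and a factorization $\langle X_i\rangle I = A_1\cdots A_k$. By Lemma~\ref{lem: gcd IJ}, $\gcd(G(\langle X_i\rangle I)) = X_i\gcd(G(I))$, and this also equals $\prod_j \gcd(G(A_j))$. Hence $X_i$ divides $\gcd(G(A_j))$ for some $j$, say $j=1$. Then $A_1 = \langle X_i\rangle A_1'$ for a monomial ideal $A_1'$. Since $A_1$ is an atom, $A_1'=R$, so $A_1=\langle X_i\rangle$. Now $\langle X_i\rangle I = \langle X_i\rangle A_2\cdots A_k$. Principal monomial ideals are cancellative in $\Mon(R)$: multiplying by the monomial $X_i$ is injective on the monomial set of an ideal, so $uJ=uJ'$ implies $J=J'$. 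We conclude $I = A_2\cdots A_k$, so the given factorization lies in $\langle X_i\rangle\,\mathsf Z(I)$.

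For the induction, write $\underline X^a = X_i\underline X^{a'}$ and apply the single-variable step to the ideal $\langle \underline X^{a'}\rangle I$. This yields
\[\mathsf Z(\langle \underline X^a\rangle I)=\langle X_i\rangle\,\mathsf Z(\langle \underline X^{a'}\rangle I).\]
The inductive hypothesis then gives $\langle X_i\rangle\langle X_1\rangle^{a'(1)}\cdots\langle X_N\rangle^{a'(N)}\,\mathsf Z(I)$. The second equality in the statement is exactly Lemma~\ref{lem: princ div closed}\ref{lem: princ div closed 1}.

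The main obstacle is the step that extracts the atom $\langle X_i\rangle$: we need a factor $A_j$ with $X_i$ dividing all of its generators. This is where Lemma~\ref{lem: gcd IJ} is essential. Multiplicativity of $\gcd(G(\cdot))$ lets us compare the $X_i$-adic valuation of the gcd on both sides, and since $\mathcal M(R)$ is free, a positive valuation on the left forces some $A_j$ to carry $X_i$ in its gcd. The remaining ingredients, atomicity forcing $A_j=\langle X_i\rangle$ and cancellation by a monomial, are routine.
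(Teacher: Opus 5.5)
Your proof is correct and follows the paper's argument. Both reduce to a single variable $X_i$, find a factor all of whose generators are divisible by $X_i$, use atomicity to conclude it equals $\langle X_i\rangle$, cancel, and iterate. The differences are cosmetic: you locate that factor via the multiplicativity of $\gcd(G(\cdot))$ from Lemma~\ref{lem: gcd IJ}, while the paper argues directly by picking in each factor a generator not divisible by $X_i$ and multiplying them; and you check cancellation by a monomial by hand, where the paper cites cancellativity of principal ideals.
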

\begin{proof}
    If $a=\underline 0$, then $\langle \underline X^a\rangle=R$ and the claim is trivial. Assume then $a>\underline 0$. By Lemma \ref{lem: princ div closed}\ref{lem: princ div closed 1}, it suffices to prove the first equality. We do it for $\underline X^a = T\in\{X_1,\dots,X_N\}$, as the general case follows by iteration. It is clear that adjoining $\langle T\rangle$ to any factorization of $I$ yields a factorization of $\langle T\rangle I$, so it remains to prove the converse.

    Suppose that 
    \[\langle T\rangle I = U_1\cdots U_\ell,\]
    where $U_1,\dots,U_\ell$ are atoms of $\Mon(R)$. We claim that there exists a $j \in [1,\ell]$ such that $T\mid v$ for every $v\in G(U_j)$. Otherwise, for every $j\in[1,\ell]$ we could choose $v_j\in G(U_j)$ such that $T\nmid v_j$. Therefore,
    \[v_1\cdots v_\ell\in  U_1\cdots U_\ell=\langle T\rangle I\]
    while $T\nmid v_1\cdots v_\ell$, contradicting the fact that every monomial in $\langle T\rangle I$ is divisible by $T$. 

    Without loss of generality, assume that $T\mid v$ for every $v\in G(U_1)$. Then $U_1=\langle T\rangle U_1'$ for some $U_1'\in\Mon(R)$. Since $U_1$ is an atom, and $\langle T\rangle\ne R$, we must have $U_1'=R$, and so $U_1=\langle T\rangle$. Therefore,
    \[\langle T\rangle I = \langle T\rangle U_2\cdots U_\ell.\]
    As nonzero principal ideals are cancellative elements in $\mathcal I(R)$ (and so in $\Mon(R)$) by \cite{An-Ro97}, we obtain \[I = U_2\cdots U_\ell.\] Thus, every factorization of $\langle T\rangle I$ is obtained by adjoining $\langle T\rangle$ to a factorization of $I$, and this proves the claim.
\end{proof}

The following corollary is an immediate consequence of the above proposition.
\begin{corollary}
    For any $I\in\Mon(R)$, let $d=\gcd{(G(I))}$ and $J=\frac {1}{d} I$. Then, $J\in\Mon(R)$ and we have
    \[ \mathsf Z_{\Mon(R)}(I) = \mathsf Z_{\Mon(R)}(\langle d\rangle) \mathsf Z_{\Mon(R)}(J).  \]
\end{corollary}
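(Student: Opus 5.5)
The plan is to derive the corollary from Proposition~\ref{prop: princ factors}, applied with $\underline X^a = d$. First I will check that $J$ is a well-defined element of $\Mon(R)$. Write $G(I)=\{u_1,\dots,u_k\}$ and $d=\gcd(u_1,\dots,u_k)$. Then $d\mid u_i$ for every $i$, so each $u_i/d$ is a monomial of $R$. Let $J$ be the monomial ideal generated by these quotients. Since $R$ is a domain, multiplication by $d$ is injective, and $\langle d\rangle J$ is generated by the monomials $d\cdot (u_i/d)=u_i$. Hence $I=\langle d\rangle J$, and $J=\frac1d I$ is a nonzero monomial ideal, i.e.\ $J\in\Mon(R)$. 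Lemma~\ref{lem: gcd IJ} also gives $\gcd(G(J))=1$, though this is not needed here.

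Next, write $d=\underline X^a$ with $a\in\mathbb N^N$, which is possible because $d$ is a monomial. Applying Proposition~\ref{prop: princ factors} to the ideal $J$ and the monomial $\underline X^a$ yields
\[\mathsf Z_{\Mon(R)}(I)=\mathsf Z_{\Mon(R)}(\langle d\rangle J)=\mathsf Z_{\Mon(R)}(\langle d\rangle)\,\mathsf Z_{\Mon(R)}(J),\]
which is exactly the claim. The case $d=1$ is trivial, and is in any case already covered by the proposition.

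There is no real obstacle here. The only point needing care is the identity $I=\langle d\rangle J$ with $J$ monomial, which was handled in the first step, so the corollary is an immediate substitution into Proposition~\ref{prop: princ factors}.
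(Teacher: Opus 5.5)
Your proposal is correct and is exactly the paper's argument: the paper states the corollary as an immediate consequence of Proposition~\ref{prop: princ factors}, applied to $J$ and the monomial $d=\underline X^a$. Your first step spells out the routine check, left implicit in the paper, that $J\in\Mon(R)$ and $I=\langle d\rangle J$.
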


Proposition \ref{prop: princ factors} also yields a translation property for sets of lengths: multiplying by a principal monomial ideal shifts all factorization lengths by the length of its unique factorization. This gives the following realization result.

\begin{proposition}\label{prop: relization of SOL}
  Let $m,n\in \mathbb N$ such that $2\le m\le n$. Then, there exists a monomial ideal $I\in \Mon(R)$ such that $\mathsf L_{\Mon(R)}(I)=[m,n]$.
\end{proposition}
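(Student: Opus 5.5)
We first reduce to the case $m=2$ using Proposition~\ref{prop: princ factors}. If $J\in\Mon(R)$ satisfies $\mathsf L_{\Mon(R)}(J)=[2,k]$ with $k=n-m+2$, then $I:=\langle X_1^{m-2}\rangle J$ has
\[\mathsf Z_{\Mon(R)}(I)=\langle X_1\rangle^{m-2}\,\mathsf Z_{\Mon(R)}(J),\]
so $\mathsf L_{\Mon(R)}(I)=(m-2)+[2,k]=[m,n]$. It therefore suffices to realize $[2,k]$ for every $k\ge 2$, and we work only in the variables $X_1,X_2$. Let $\mathfrak m=\langle X_1,X_2\rangle$; this is an atom, since it is not principal and has min-degree $1$.

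The upper bound comes from the min-degree. Every atom $A\ne R$ has $\mdeg(A)\ge 1$, because $\mdeg(A)=0$ forces $A=R$. By additivity of $\mdeg$, every factorization of $J$ therefore has length at most $\mdeg(J)$. Take $J=\mathfrak m^k$, so $\mdeg(J)=k$ and $\mathsf L(J)\subseteq[1,k]$. Moreover $1\notin\mathsf L(J)$, since $J=\mathfrak m\cdot\mathfrak m^{k-1}$ is not an atom. Length $k$ is realized by $\mathfrak m^k$ itself, which settles $k=2$.

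For the intermediate lengths $j\in[2,k-1]$ we need, for each $r\ge 2$, an atom $B_r$ of min-degree $r$ with $\mathfrak m B_r=\mathfrak m^{r+1}$. Iterating gives $\mathfrak m^{s}B_r=\mathfrak m^{r+s}$ for all $s\ge 1$. Choosing $r=k-j+1$ and $s=j-1$ then yields the factorization $\mathfrak m^{j-1}B_r$ of $\mathfrak m^k$, which has length $j$.

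We look for $B_r$ among equigenerated ideals in degree $r$: $B_S$ is generated by $X_1^{r-t}X_2^{t}$ for $t\in S$, where $S\subseteq[0,r]$ contains $0$ and $r$.
\begin{itemize}
\item The condition $\mathfrak m B_S=\mathfrak m^{r+1}$ becomes $S+\{0,1\}=[0,r+1]$, i.e.\ $S$ has no two consecutive gaps.
\item For atomicity, any factorization $B_S=CD$ into proper ideals should force $C$ and $D$ to be equigenerated of degrees $c$ and $d$ with $c+d=r$. This uses additivity of $\mdeg$, together with the analogous additivity of the maximal generator degree, which we would verify in the same way.
\item Minimal generators of a product of equigenerated ideals are exactly the products of generators. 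So atomicity of $B_S$ reduces to the combinatorial statement that $S$ is not a sumset $S_C+S_D$ with $S_C\subseteq[0,c]$ containing $0,c$ and $S_D\subseteq[0,d]$ containing $0,d$, where $c,d\ge 1$.
\end{itemize}

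The main obstacle is choosing $S$ so that it is sum-indecomposable for every $r\ge 2$. A naive choice like $S=[0,r]\setminus\{1\}$ fails: for large $r$ it equals $\{0,2,\dots,c\}+\{0,2,\dots,d\}$. We would first try $S=[0,r]\setminus\{1,r-1\}$, for $r\ge 3$, so that $B_S$ omits $X_1^{r-1}X_2$ and $X_1X_2^{r-1}$; for $r=2$ we simply take $B_2=\langle X_1^2,X_2^2\rangle$, which works since $\mathfrak m\langle X_1^2,X_2^2\rangle=\mathfrak m^3$. The argument would go as follows. Since $1\notin S$, neither $S_C$ nor $S_D$ contains $1$. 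Since $r-1\notin S$, neither contains $c-1$ or $d-1$. Hence $S_C\subseteq\{0\}\cup[2,c-2]\cup\{c\}$, and similarly for $S_D$. Then $2\in S$ must come from a summand equal to $2$, which forces, say, $c=2$ or $2\in S_C$. A case analysis on whether $c$ or $d$ is small should then exhibit an element missing from the sumset, e.g.\ $3$ or $r-2$. If that analysis breaks down, the fallback is to remove a sparser set of positions, keeping gaps isolated so the condition $S+\{0,1\}=[0,r+1]$ survives, and to exploit an asymmetric gap pattern that no sumset can reproduce.
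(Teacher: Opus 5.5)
Your reduction to $m=2$ via Proposition~\ref{prop: princ factors} is the same as the paper's, and your bound $\mathsf L_{\Mon(R)}(\mathfrak m^k)\subseteq[2,k]$ for $\mathfrak m=\langle X_1,X_2\rangle$, obtained from additivity of $\mdeg$, is correct. The paper does not reprove $\mathsf L_{\Mon(R)}(\mathfrak m^k)=[2,k]$; it cites it from \cite[Theorem 4.7(3)]{BCK}. Your self-contained route has a genuine gap at the intermediate lengths, because you never produce the atoms $B_r$.

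Your candidate $S=[0,r]\setminus\{1,r-1\}$ fails for every $r\ge 3$ except $r=6$:
\begin{itemize}
\item For $r=3$ it gives $S=\{0,3\}$. Its two gaps are consecutive, so $\mathfrak m\langle X_1^3,X_2^3\rangle\neq\mathfrak m^4$.
\item For $r=4$ and $r=5$ you get $B_S=\langle X_1^2,X_2^2\rangle^2$ and $B_S=\langle X_1^2,X_2^2\rangle\langle X_1^3,X_2^3\rangle$, respectively.
\item For every $r\ge 7$, write $S_r:=[0,r]\setminus\{1,r-1\}$. A direct check gives $S_r=\{0,2\}+S_{r-2}$, i.e.\ $B_{S_r}=\langle X_1^2,X_2^2\rangle B_{S_{r-2}}$.
\end{itemize}
So the announced case analysis cannot ``exhibit a missing element'': the branch $c=2$, $S_C=\{0,2\}$ produces none. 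The fallback sentence is not an argument.

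There is also a secondary error. The maximal generator degree is not additive, and factors of an equigenerated ideal need not be equigenerated. For example, $\langle X_1^3,X_1^2X_2^2,X_2^3\rangle\,\mathfrak m=\langle X_1^4,X_1^3X_2,X_1X_2^3,X_2^4\rangle$. What you actually need still holds by \eqref{eq: factor equigen}: if $B=CD$ with $B$ equigenerated and $C,D$ proper, then $B=C_1D_1$. Here $C_1,D_1$ are the ideals generated by the min-degree generators of $C,D$, and they are proper.

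The missing ingredient can be supplied.
\begin{itemize}
\item For $r=2$ keep $\langle X_1^2,X_2^2\rangle$.
\item For $r\ge 3$ take $S_r=(2\mathbb N\cap[0,r])\cup\{r-1,r\}$. Its gaps are the odd numbers below $r-1$, which are isolated, so $\mathfrak m B_{S_r}=\mathfrak m^{r+1}$.
\item Suppose $S_r=S_C+S_D$ with $0,c\in S_C$, $0,d\in S_D$ and $c,d\ge 1$. Then $S_C\cup S_D\subseteq S_r$, so $c,d\in S_r$. Since $1\notin S_r$, this forces $c,d\in[2,r-2]$.
\item Hence $S_C,S_D\subseteq S_r\cap[0,r-2]$ consist of even numbers only. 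Then $S_C+S_D$ misses whichever of $r-1,r$ is odd, a contradiction.
\end{itemize}
With this lemma and the min-degree reduction above, your argument becomes a complete, self-contained proof of the result the paper cites. Without it, or the citation, the proposal does not establish the intermediate lengths.
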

\begin{proof}
It was proved in \cite[Theorem 4.7(3)]{BCK} that, for every integer $k\ge 2$,
\[ \mathsf L_{\Mon(R)}(\langle X_1,X_2\rangle^k)=[2,k].\]
Then, given \(2\le m\le n\), set $k=n-m+2$ and multiply $\langle X_1,X_2\rangle^k$ by $\langle X_1\rangle^{m-2}$. As $\langle X_1\rangle^{m-2}$ has a unique factorization of length $m-2$, we get from Proposition \ref{prop: princ factors} that
\[ \mathsf L_{\Mon(R)} \bigl(\langle X_1\rangle^{m-2}\langle X_1,X_2\rangle^{n-m+2}\bigr) = (m-2)+[2,n-m+2] = [m,n].\]
\end{proof}

\subsection{Characterizing \lq\lq small atoms\rq\rq}\label{char atom mu le 4}
In this section we provide a complete characterization of the atoms of $\Mon(R)$ with at most four minimal generators. The only atoms among the principal monomial ideals of $R$ are $\langle X_1\rangle, \dots, \langle X_N\rangle$ by Lemma \ref{lem: princ div closed}. The following results give necessary and sufficient conditions for $I=\langle\underline X^{a_1},\dots,\underline X^{a_{\mu(I)}}\rangle$ to be a (non-)atom in $\Mon(R)$, when $2\le \mu(I)\le 4$. We start with a couple of remarks establishing the theoretical and notational framework for our discussion, which will be used in the subsequent results without further comment.

\begin{remark}\label{rem: E1 E2}
    Let $I,J,K\in\Mon(R)\setminus\{R\}$ with $G(I) = \{\underline X^{a_1},\dots,\underline X^{a_{\mu(I)}}\}$, $G(J) = \{ \underline X^{\beta_1},\dots,\underline X^{\beta_{\mu(J)}} \}$, and $G(K) = \{ \underline X^{\gamma_1},\dots,\underline X^{\gamma_{\mu(K)}} \}$. By the minimality of the generating sets $G(I),G(J),G(K)$, we have that $a_i\perp a_j$ for every $i\neq j$, $\beta_k\perp\beta_\ell$ for $k\neq \ell$, and $\gamma_r\perp\gamma_s$ for $r\neq s$.

    With this notation, by Lemma \ref{lem: mon_gen}, the equality $I=JK$ holds if and only if the following two conditions are satisfied:
    \begin{enumerate}[label=\textup{\bf(E\arabic{*})}, mode=unboxed]
        \item\label{eq: I in JK} for every $i\in [1,\mu(I)]$, there exist $j_i\in [1,\mu(J)]$ and $k_i\in [1,\mu(K)]$ such that $a_i = \beta_{j_i}+\gamma_{k_i}$;
        \item\label{eq: JK in I} for every $j\in [1,\mu(J)]$ and every $k\in [1,\mu(K)]$, there is an $i\in [1,\mu(I)]$ such that $a_i \le \beta_j+\gamma_k$.
    \end{enumerate}
\end{remark}

\begin{remark}\label{rem: mon1 R}
    Let $I$ be a monomial ideal with $G(I) = \{\underline X^{a_1},\dots,\underline X^{a_{\mu(I)}}\}$. When studying conditions for $I$ to be an atom, we may assume that $I\neq R$, since atoms must be non-units. Moreover, in view of the preceding discussion, we may assume that $\gcd(G(I))=1$; otherwise, $I$ is divisible by a nontrivial principal monomial ideal. In terms of the exponent vectors, these conditions are equivalent to assuming $a_i> \underline 0$ for every $i\in [1,\mu(I)]$ and $\bigwedge_{i=1}^{\mu(I)}a_i=\underline 0.$ We denote by
    \[
        \Mon_1(R):=
        \{I\in\Mon(R):\gcd(G(I))=1\}
    \]
    the submonoid of $\Mon(R)$ consisting of monomial ideals with relatively prime minimal generators. By Lemma~\ref{lem: gcd IJ}, it is divisor-closed: indeed, if $IJ\in\Mon_1(R)$ for some $I,J\in\Mon(R)$, then
    \[
        1=\gcd(G(IJ))=\gcd(G(I))\gcd(G(J)),
    \]
    and hence
    \[
        \gcd(G(I))=\gcd(G(J))=1.
    \]
    Therefore, if $I=JK$ and $I\in\Mon_1(R)$, then, keeping the notation of Remark \ref{rem: E1 E2}, $\bigwedge_{j=1}^{\mu(J)} \beta_j = \bigwedge_{k=1}^{\mu(K)} \gamma_k = \underline 0$.
    Finally, $R$ is the unique principal ideal belonging to $\Mon_1(R)$.
    Indeed, if $\mu(I)=1$, say $G(I)=\{\underline X^{a}\}$, then $\gcd(G(I))=1$ if and only if $a=0$, equivalently, if and only if $I=R$. Consequently,
    \[
        I\in\Mon_1(R)\setminus\{R\}
        \quad\Longrightarrow\quad
        \mu(I)\ge 2.
    \]
\end{remark}

\bigskip

When $\mu(I)=2$, the next theorem shows that $I$ is an atom if and only if $I\in \Mon_1(R)$:

\begin{theorem}\label{thm: 2-gen}
    Let $I\in \Mon(R)$ with $G(I) = \{\underline X^{a_1},\underline X^{a_{2}}\}$. Then $I$ is not an atom of $\Mon(R)$ if and only if $a_1\wedge a_2>\underline 0$.
\end{theorem}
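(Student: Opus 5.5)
We prove the two implications separately. First suppose $a_1\wedge a_2>\underline 0$, and set $d=\underline X^{a_1\wedge a_2}$, a non-constant monomial. Then $I=\langle d\rangle\cdot I'$ with $I'=\langle \underline X^{a_1-a_1\wedge a_2},\underline X^{a_2-a_1\wedge a_2}\rangle$. Since $a_1\perp a_2$, both exponent vectors of $I'$ are nonzero, so $I'\neq R$. Also $\langle d\rangle\neq R$. Hence $I$ is a product of two proper monomial ideals and is not an atom. (Equivalently, this follows from the Corollary after Proposition~\ref{prop: princ factors}, since the factorization lengths of $I$ are shifted by $|a_1\wedge a_2|\ge 1$ while $J=\frac1d I\ne R$ contributes at least one more factor.)

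Conversely, suppose $a_1\wedge a_2=\underline 0$, i.e.\ $I\in\Mon_1(R)$, and assume for contradiction that $I=JK$ with $J,K\in\Mon(R)\setminus\{R\}$. By Remark~\ref{rem: mon1 R}, $\Mon_1(R)$ is divisor-closed, so $J,K\in\Mon_1(R)\setminus\{R\}$, and therefore $\mu(J),\mu(K)\ge 2$. We use the notation of Remark~\ref{rem: E1 E2}.

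The key step is a counting argument on the products $\beta_j+\gamma_k$. We claim there are at least three pairwise incomparable elements among the sums. Choose $\beta_1,\beta_2\in G(J)$ and $\gamma_1,\gamma_2\in G(K)$. By \ref{eq: JK in I}, each sum $\beta_j+\gamma_k$ lies above $a_1$ or above $a_2$. By \ref{eq: I in JK}, $a_1$ and $a_2$ each equal some sum. Since $a_1\perp a_2$, they are realized by distinct pairs $(j,k)$.

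For the contradiction, we use that $\mathbb N^N$ is cancellative and that the $\beta$'s, respectively the $\gamma$'s, are pairwise incomparable. Suppose $a_1=\beta_1+\gamma_1$; the pair realizing $a_2$ either shares an index with $(1,1)$ or not.
\begin{enumerate}
\item Case $a_2=\beta_1+\gamma_2$. Consider the sum $\beta_2+\gamma_1$. It is $\ge a_1=\beta_1+\gamma_1$, which gives $\beta_2\ge\beta_1$, a contradiction; or it is $\ge a_2=\beta_1+\gamma_2$. In the latter case, any sum $\beta_2+\gamma_k$ (with $k\ge 2$) dominates either $a_1$ or $a_2$. This is incompatible with the gcd condition, so we prefer the simpler route below.
\item Cleaner route via supports. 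Since $\bigwedge_k\gamma_k=\underline 0$ and $\bigwedge_j\beta_j=\underline 0$, Lemma~\ref{lem: gcd IJ} is already used. Instead, count minimal generators: the ideal $JK$ contains the monomials $\underline X^{\beta_j+\gamma_k}$, and its minimal generators are exactly two. Take $\gamma_1$ with $\mdeg$-minimal behaviour in a coordinate.
\end{enumerate}

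The step I expect to be the main obstacle is deriving the contradiction from the gcd condition. I would first try the following. Pick a coordinate $i$ with $a_1(i)>0$; then $a_2(i)=0$. Since $a_2=\beta_{j}+\gamma_{k}$, we get $\beta_j(i)=\gamma_k(i)=0$. Next pick a coordinate $i'$ with $a_2(i')>0$ and $a_1(i')=0$, and write $a_1=\beta_{j'}+\gamma_{k'}$, giving $\beta_{j'}(i')=\gamma_{k'}(i')=0$.

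Now consider the cross sum $\beta_{j'}+\gamma_k$. It vanishes where both summands vanish. It must dominate $a_1$ or $a_2$. If it dominates $a_1$, then comparing with $a_1=\beta_{j'}+\gamma_{k'}$ gives $\gamma_k\ge\gamma_{k'}$, forcing $k=k'$ by incomparability. If it dominates $a_2$, we get $\beta_{j'}\ge\beta_j$, forcing $j=j'$. Symmetrically, treat $\beta_j+\gamma_{k'}$.

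Each outcome yields $j=j'$ or $k=k'$. Say $j=j'$. Then $a_1-a_2=\gamma_{k'}-\gamma_k$. Because $a_1\wedge a_2=\underline 0$, coordinatewise $a_1=\beta_j+\gamma_{k'}$ and $a_2=\beta_j+\gamma_k$ share $\beta_j$, so $\beta_j\le a_1\wedge a_2=\underline 0$. Hence $\beta_j=\underline 0$, i.e.\ $1\in G(J)$ and $J=R$, a contradiction. The case $k=k'$ is symmetric and gives $K=R$. This completes the plan.
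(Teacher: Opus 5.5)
Your final paragraph is a complete and correct proof, and it is essentially the paper's argument run by contradiction. By \ref{eq: JK in I} the single cross sum $\beta_{j'}+\gamma_k$ must dominate $a_1$ or $a_2$. Cancellation plus the antichain property of $G(J)$ and $G(K)$ then forces $j=j'$ or $k=k'$, so the shared summand lies below $a_1\wedge a_2$. The paper concludes $a_1\wedge a_2>\underline 0$ directly because that summand is $>\underline 0$; you instead conclude $J=R$ or $K=R$ from $a_1\wedge a_2=\underline 0$.

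Everything between the invocation of Remark~\ref{rem: E1 E2} and that last paragraph is unused and should be deleted. This includes the unproved claim about three pairwise incomparable sums, items (1)--(2), the appeal to divisor-closedness of $\Mon_1(R)$, and the choice of coordinates $i,i'$.
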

\begin{proof}
    Since $a_1,a_2\in\mathbb N^N$, we have $a_1\wedge a_2\in\mathbb N^N$, i.e., $a_1\wedge a_2\ge \underline 0$. If $a_1\wedge a_2 > \underline 0$, then, using that $a_1\perp a_2$, it is easy to see that
   \[J=\langle \underline X^{a_1\wedge a_2}\rangle \quad \text{ and }\quad K=\langle \underline X^{a_1-a_1\wedge a_2}, \underline X^{a_2-a_1\wedge a_2} \rangle \]
   are non-units of $\Mon(R)$. Thus, $I=JK$ is not an atom.
    On the other hand, assume that $I=JK$ for some $J,K\in\Mon(R)\setminus\{R\}$, and keep the notation of Remark \ref{rem: E1 E2}. By condition \ref{eq: I in JK}, there are $j_1,j_2\in [1,\mu(J)]$ and $k_1,k_2\in [1,\mu(K)]$ such that $a_1 = \beta_{j_1}+\gamma_{k_1}$ and $a_2=\beta_{j_2}+\gamma_{k_2}$. By condition \ref{eq: JK in I}, we have $a_i\le \beta_{j_1}+\gamma_{k_2}$ for either $i=1$ or $i=2$. Since $a_i=\beta_{j_i}+\gamma_{k_i}$, we obtain
    \[ \beta_{j_i}+\gamma_{k_i} \le \beta_{j_1}+\gamma_{k_2}, \]
    whence either $\gamma_{k_1}\le \gamma_{k_2}$ or $\beta_{j_2}\le \beta_{j_1}$. By minimality of $G(J)$ and $G(K)$, we obtain either $k_1=k_2$ or $j_2=j_1$. In both cases, by condition \ref{eq: I in JK} we have $a_1\wedge a_2>\underline 0$.
\end{proof}

The next theorem gives a complete characterization of (non-)atoms among the elements of $\Mon(R)$ with $3$ minimal generators. 
\begin{theorem}\label{thm: 3-generated}
Let $I\in \Mon_1(R)$ with $G(I) = \{\underline X^{a_1},\underline X^{a_{2}}, \underline X^{a_3}\}$. Then $I$ is not an atom of $\Mon(R)$ if and only if there exists a permutation $\sigma\in S_3$ such that $a_{\sigma(1)}+a_{\sigma(2)}\ge 2a_{\sigma(3)}$.
\end{theorem}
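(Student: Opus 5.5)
The plan is to prove the two implications separately: an explicit factorization for sufficiency, and a combinatorial analysis of conditions \ref{eq: I in JK}--\ref{eq: JK in I} for necessity. The case analysis in the necessity part is where I expect the main difficulty.

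\emph{Sufficiency.} Relabel so that $a_1+a_2\ge 2a_3$. I will look for $J=\langle \underline X^{u},\underline X^{v}\rangle$ and $K=\langle \underline X^{u'},\underline X^{v'}\rangle$ with
\[ u+u'=a_1,\qquad v+v'=a_2,\qquad u+v'=a_3 .\]
Choose $u=a_1\wedge a_3$, $v'=a_3-u$, $u'=a_1-u$ and $v=a_2-v'$. All four vectors must lie in $\mathbb N^N$. The only nontrivial requirement is $v'\le a_2$. In a coordinate where $a_3(t)>a_1(t)$ we have $v'(t)=a_3(t)-a_1(t)$, and the hypothesis $2a_3\le a_1+a_2$ gives $a_3(t)-a_1(t)\le a_2(t)-a_3(t)\le a_2(t)$. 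In every other coordinate $v'(t)=0$.

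Next, $J,K\ne R$ follows from incomparability:
\begin{itemize}
\item $u=\underline 0$ would force $v'=a_3\le a_2$;
\item $v'=\underline 0$ would force $a_3=u\le a_1$;
\item $u'=\underline 0$ would force $a_1\le a_3$;
\item $v=\underline 0$ would force $a_2=v'\le a_3$.
\end{itemize}
Each of these contradicts $a_i\perp a_j$ for $i\neq j$.

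Finally I check $I=JK$. The four products of generators have exponents $a_1$, $a_2$, $a_3$ and $v+u'=a_1+a_2-a_3\ge a_3$. So \ref{eq: I in JK} and \ref{eq: JK in I} both hold, and $I$ is not an atom.

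\emph{Necessity.} Suppose $I=JK$ with $J,K\ne R$, using the notation of Remark~\ref{rem: E1 E2}. Since $I\in\Mon_1(R)$, Remark~\ref{rem: mon1 R} gives $J,K\in\Mon_1(R)$. By \ref{eq: I in JK}, write $a_i=\beta_{j_i}+\gamma_{k_i}$ for $i=1,2,3$. The index pairs $(j_i,k_i)$ are pairwise distinct, since the $a_i$ are distinct.

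The key step repeats the argument of Theorem~\ref{thm: 2-gen}. Suppose that for some $i\ne l$ both $j_i\ne j_l$ and $k_i\ne k_l$ hold. By \ref{eq: JK in I}, the cross sum $\beta_{j_i}+\gamma_{k_l}$ dominates some $a_m$.
\begin{itemize}
\item If $m=i$, then $\gamma_{k_i}\le\gamma_{k_l}$, contradicting minimality of $G(K)$.
\item If $m=l$, then $\beta_{j_l}\le \beta_{j_i}$, contradicting minimality of $G(J)$.
\end{itemize}
Hence $m$ is the third index, and the same holds for $\beta_{j_l}+\gamma_{k_i}$. Adding the two inequalities gives $a_i+a_l\ge 2a_m$, which is the required condition with $\sigma=(i,l,m)$.

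It remains to exclude the case in which every two of the three pairs share a coordinate. This is the main obstacle: it is a small grid-combinatorics check. Three distinct points of a grid that pairwise share a row or a column must all lie in a single row or a single column. Indeed, if points $1,2$ share a row, then point $3$ cannot share a column with both of them, so it must share their row.

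Say all three pairs share $j$. Then $\beta_j\le a_1\wedge a_2\wedge a_3=\underline 0$, so $\beta_j=\underline 0$ and $1\in J$, i.e.\ $J=R$, a contradiction. The column case gives $K=R$ in the same way. This completes the argument.
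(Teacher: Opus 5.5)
Your proposal is correct, and your sufficiency part is exactly the paper's construction. Your $u,v,u',v'$ give the same ideals
$J=\langle \underline X^{a_1\wedge a_3},\underline X^{a_2-a_3+a_1\wedge a_3}\rangle$ and $K=\langle \underline X^{a_1-a_1\wedge a_3},\underline X^{a_3-a_1\wedge a_3}\rangle$. Your checks that $v'\le a_2$ and that none of the four exponents is $\underline 0$ are a little cleaner than the paper's componentwise argument.

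For necessity you take a genuinely different route.

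\textbf{The paper's route.} It splits into cases according to $\lvert\{j_1,j_2,j_3\}\rvert$ and $\lvert\{k_1,k_2,k_3\}\rvert$. It shows that the only configuration that can actually occur is the ``L-shaped'' one: up to relabeling, $j_1\neq j_2=j_3$ and $k_1=k_2\neq k_3$. There a crossed sum yields $a_1+a_3\ge 2a_2$. The other configurations are refuted separately. When two $j$'s coincide and the $k$'s are pairwise distinct, some crossed sum can dominate no generator. When all $j$'s and all $k$'s are distinct (Case 3), the two cyclic systems of crossed inequalities are summed, which forces equalities and hence $\underline 0<\beta_{j_1}\le a_1\wedge a_2\wedge a_3$.

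\textbf{Your route.} You isolate a single observation: whenever two index pairs differ in both coordinates, both crossed sums must dominate the unique third generator, and adding them gives the condition. The remaining configurations are handled by your grid argument. Pairwise row/column sharing forces all three pairs into one row or one column, which contradicts $a_1\wedge a_2\wedge a_3=\underline 0$. In this way you obtain the inequality vacuously in the paper's ``impossible'' configurations instead of refuting them, so you never need the equality-forcing summation of Case 3.

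\textbf{What each buys.} Your argument is shorter and less error-prone. The paper's longer analysis gives structural information, namely exactly which factorization patterns can arise. It is also the template for Theorem~\ref{thm: 4-generated}. With four generators a crossed sum need only dominate one of two remaining generators, so your shortcut no longer pins down a single index and a genuine case analysis becomes necessary.
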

\begin{proof}
Assume that $I$ is not an atom of $\Mon(R)$. As $I\ne R$, this means that $I=JK$ for some $J,K\in\Mon(R)\setminus\{R\}$. Using the notation of Remark \ref{rem: E1 E2}, 
by \ref{eq: I in JK}, there exist $j_1,j_2,j_3\in [1,\mu(J)]$ and $k_1,k_2,k_3\in [1,\mu(K)]$ such that $a_i=\beta_{j_i}+\gamma_{k_i}$ for every $i\in [1,3]$. We proceed by analyzing the cardinality of the index sets $\{j_1,j_2,j_3\}$ and $\{k_1,k_2,k_3\}$.

\medskip
\noindent\textbf{Case 1: $\lvert\{j_1,j_2,j_3\}\rvert = 1$.} 
This is impossible, as $j_1=j_2=j_3$ implies $\underline{0} < \beta_{j_1} \le a_1 \wedge a_2 \wedge a_3$ by \ref{eq: I in JK}, a contradiction.

\medskip
\noindent\textbf{Case 2: $\lvert\{j_1,j_2,j_3\}\rvert = 2$.} 
Assume without loss of generality that $j_1 \neq j_2 = j_3$, which implies $k_2 \neq k_3$ (otherwise $a_2=a_3$ by \ref{eq: I in JK}). 

\begin{itemize}
    \item Assume $k_1=k_2$. Condition \ref{eq: JK in I} forces $a_i \le \beta_{j_1} + \gamma_{k_3}$ for some $i\in[1,3]$. 
    By \ref{eq: I in JK}, both $i=1$ and $i=3$ yield immediate contradictions ($\gamma_{k_1} \le \gamma_{k_3}$ and $\beta_{j_3} \le \beta_{j_1}$, respectively). Thus $a_2 \le \beta_{j_1} + \gamma_{k_3}$, which leads to:
    \[ a_1+a_3 = \beta_{j_1}+\gamma_{k_1}+\beta_{j_3}+\gamma_{k_3}=\beta_{j_1}+\gamma_{k_2}+\beta_{j_2}+\gamma_{k_3} \ge 2a_2 .\]
    \item The subcase $k_1 = k_3$ is perfectly analogous to the previous one.
    \item Assume that $k_1,k_2, k_3$ are pairwise distinct. By \ref{eq: JK in I}, we have $a_i\le \beta_{j_2}+\gamma_{k_1}$ for some $i\in [1,3]$. If $i=1$, we obtain $a_1=\beta_{j_1}+\gamma_{k_1}\le \beta_{j_2}+\gamma_{k_1}$ by \ref{eq: I in JK}, implying $\beta_{j_1}\le \beta_{j_2}$, against $j_1\neq j_2$. Similarly, one finds a contradiction in each of the subcases $i=2$ and $i=3$.
\end{itemize}

\medskip
\noindent\textbf{Case 3:$\lvert\{ j_1,j_2,j_3\}\rvert=3$.}
The subcase $\lvert\{k_1,k_2,k_3\}\rvert \le 2$ is ruled out by symmetric arguments to Cases 1 and 2 (exchanging the roles of $J$ and $K$). Assume then $\lvert\{k_1,k_2,k_3\}\rvert = 3$. By \ref{eq: JK in I}, for any pair $(j_r, k_s)$ with $r \neq s$, there exists $a_i \le \beta_{j_r} + \gamma_{k_s}$. If $i=r$ or $i=s$, we obtain a contradiction via \ref{eq: I in JK} (e.g., $a_1 \le \beta_{j_1}+\gamma_{k_2} \implies \gamma_{k_1} \le \gamma_{k_2}$). 
Thus, the index $i$ must be the unique element in $\{1,2,3\} \setminus \{r,s\}$. By cyclic permutations, this yields:
\begin{equation}\label{eq: mu3 system}
    a_3 \le \beta_{j_1}+\gamma_{k_2}, \quad a_1 \le \beta_{j_2}+\gamma_{k_3}, \quad a_2 \le \beta_{j_3}+\gamma_{k_1}.
\end{equation}
but also
\begin{equation}\label{eq: mu3 system 2}
    a_3 \le \beta_{j_2}+\gamma_{k_1}, \quad a_1 \le \beta_{j_3}+\gamma_{k_2}, \quad a_2 \le \beta_{j_1}+\gamma_{k_3}.
\end{equation}
Summing the relations \eqref{eq: mu3 system} gives:
\[
    a_1+a_2+a_3 \le (\beta_{j_1}+\gamma_{k_2}) + (\beta_{j_2}+\gamma_{k_3}) + (\beta_{j_3}+\gamma_{k_1}) = a_1+a_2+a_3,
\]
which forces all inequalities in \eqref{eq: mu3 system} to hold as equalities. In particular, we have $a_3 = \beta_{j_1}+\gamma_{k_2}$. Proceeding analogously with the relations in \eqref{eq: mu3 system 2}, we get $a_2 = \beta_{j_1}+\gamma_{k_3}$. Thus, combining the two above equalities with $a_1 = \beta_{j_1}+\gamma_{k_1}$ from \ref{eq: I in JK}, we obtain $\underline{0} < \beta_{j_1} \le a_1 \wedge a_2 \wedge a_3$, a contradiction.

Vice versa, let us show that $a_{\sigma(1)}+a_{\sigma(2)}\ge 2a_{\sigma(3)}$ for some $\sigma\in S_3$ is also a sufficient condition for $I=\langle \underline X^{a_1}, \underline X^{a_2}, \underline X^{a_3} \rangle$ to be a non-atom of $\Mon(R)$. Up to renaming the exponents of the generators of $I$, we can assume that $a_1+a_2\ge 2a_3$. Note that $a_1\wedge a_3>\underline 0$, otherwise $a_2\ge 2a_3 > a_3$, against $a_2\perp a_3$. Analogously, $a_2\wedge a_3>\underline 0$. We can therefore define the proper ideals:
\[
    J = \langle \underline{X}^{a_1 \wedge a_3}, \underline{X}^{a_2-a_3+a_1 \wedge a_3} \rangle, \qquad
    K = \langle \underline{X}^{a_1 - a_1 \wedge a_3}, \underline{X}^{a_3 - a_1 \wedge a_3} \rangle.
\]
Indeed, no generator exponent equals $\underline{0}$ because $a_1, a_3 > a_1 \wedge a_3$ (otherwise $a_3\ge a_1$ or $a_1\ge a_3$), and for every component $i \in [1,N]$:
\[
    a_2(i) + (a_1 \wedge a_3)(i) = a_2(i)+a_1(i)\ge 2a_3(i) \ge a_3(i),
\]
or
\[
    a_2(i) + (a_1 \wedge a_3)(i) = a_2(i)+a_3(i) \ge a_3(i),
\]
with at least one strict inequality since $a_2 \wedge a_3 > \underline{0}$. Finally, we verify that $JK = I$. Clearly $\underline{X}^{a_1}, \underline{X}^{a_2}, \underline{X}^{a_3} \in JK$. The remaining product generator satisfies:
\[
    (a_2 + a_1 \wedge a_3 - a_3) + (a_1 - a_1 \wedge a_3) = a_2 + a_1 - a_3 \ge 2a_3 - a_3 = a_3,
\]
hence $\underline{X}^{a_2+a_1-a_3} \in \langle \underline{X}^{a_3} \rangle \subseteq I$, completing the proof.
\end{proof}

We conclude this section by characterizing the non-atoms of $\Mon(R)$ with $4$ minimal generators. The proof proceeds by a case analysis and relies on arguments analogous to those detailed in the proof of the previous theorem. To avoid unnecessary repetition, we omit routine symmetric arguments and give details only when new features arise.

\begin{theorem}\label{thm: 4-generated}
Let $I\in \Mon_1(R)$ with $G(I) = \{\underline X^{a_1},\underline X^{a_{2}}, \underline X^{a_3}, \underline X^{a_4}\}$. Then $I$ is not an atom of $\Mon(R)$ if and only if it satisfies one of the following conditions, up to a relabeling of the exponents of its generators:
\begin{enumerate}[label=\textup{(c\arabic{*})}, mode=unboxed]
 \item \label{mu4 c1} $a_1+a_3=a_2+a_4$; 
 \item \label{mu4 c2} $a_1+a_3\ge 2a_4$ and $a_2+a_4\ge 2a_1$; 
 \item \label{mu4 c3} $a_1\wedge a_2\wedge a_3>\underline 0$ and one of the following holds:
        \begin{enumerate}[label=\textup{(\roman{*})}, mode=unboxed]
            \item \label{mu4 c31} $a_2+a_4\ge a_1+a_3$ and $a_3+a_4\ge 2a_1$;
            \item \label{mu4 c32} $a_2+a_4\ge 2a_1$ and $a_3+a_4\ge a_1+a_2$;
            \item \label{mu4 c33} $a_2+a_4\ge 2a_1$ and $a_3+a_4\ge 2a_1$.
        \end{enumerate}
    \end{enumerate}
\end{theorem}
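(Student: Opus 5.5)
I will follow the scheme of the proof of Theorem~\ref{thm: 3-generated}. For necessity, write $I=JK$ with $J,K\in\Mon(R)\setminus\{R\}$. By Remark~\ref{rem: mon1 R} both lie in $\Mon_1(R)$, so $\mu(J),\mu(K)\ge 2$. By \ref{eq: I in JK} I fix indices with $a_i=\beta_{j_i}+\gamma_{k_i}$ for $i\in[1,4]$, and organise the case analysis by the pair of partitions of $\{1,2,3,4\}$ induced by $i\mapsto j_i$ and $i\mapsto k_i$. Two constraints are used throughout. First, the map $i\mapsto (j_i,k_i)$ is injective, since the $a_i$ are distinct. Second, no fiber of $i\mapsto j_i$ (or of $i\mapsto k_i$) can be all of $\{1,2,3,4\}$, since otherwise $\underline 0<\beta\le\bigwedge a_i$.

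The basic tool is the ``cross-term'' argument. For $r\ne s$, the monomial with exponent $\beta_{j_r}+\gamma_{k_s}$ lies in $JK=I$, so by \ref{eq: JK in I} it dominates some $a_i$. The choices $i=r$ (when $k_r\ne k_s$) and $i=s$ (when $j_r\ne j_s$) are excluded by minimality of $G(J)$ and $G(K)$. Summing such domination inequalities, as in \eqref{eq: mu3 system}, forces equalities whenever the inequalities close up into a cycle.

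The cases then produce the three conditions.
\begin{enumerate}
\item Both partitions of type $\{2,2\}$ and transversal ($J=\langle\beta_1,\beta_2\rangle$, $K=\langle\gamma_1,\gamma_2\rangle$ in general position): the four sums are exactly $a_1,\dots,a_4$ in a $2\times2$ grid, which gives \ref{mu4 c1}, $a_1+a_3=a_2+a_4$.
\item Partitions of types $\{3,1\}$ and $\{2,2\}$, or $\{2,1,1\}$: the relevant $\beta$ or $\gamma$ is a common divisor of three generators, giving $a_1\wedge a_2\wedge a_3>\underline 0$. The uncovered cross terms then give the pair of inequalities in \ref{mu4 c3}. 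The three alternatives (i)--(iii) correspond to which of $a_1$, $a_2$, $a_3$ the cross terms land on, computed as in the first bullet of Case~2 of Theorem~\ref{thm: 3-generated}.
\item Two generators shared in a ``chain'' pattern, the analogue of $k_1=k_2$ in Theorem~\ref{thm: 3-generated} applied twice: this gives \ref{mu4 c2}.
\item Partitions with three or four blocks on both sides: excluded by the cyclic summation argument, exactly as in Case~3 of Theorem~\ref{thm: 3-generated}. Any index pattern with a cyclic system of cross inequalities collapses to equalities, which then yield either a common $\beta$ for all $a_i$ or a reduction to \ref{mu4 c1}.
\end{enumerate}

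For sufficiency I give explicit factorizations, mimicking the construction in Theorem~\ref{thm: 3-generated}.
\begin{itemize}
\item Under \ref{mu4 c1}: take $J=\langle \underline X^{a_1\wedge a_2},\underline X^{a_4-a_1+a_1\wedge a_2}\rangle$ and $K=\langle \underline X^{a_1-a_1\wedge a_2},\underline X^{a_2-a_1\wedge a_2}\rangle$. The products are $a_1$, $a_2$, $a_4$ and $a_4-a_1+a_2=a_3$, using $a_1+a_3=a_2+a_4$. Before using these generators I must check that they lie in $\mathbb N^N$, are nonzero, and are incomparable; this uses the $\wedge$ identities and $a_i\perp a_j$.
\item Under \ref{mu4 c2}: take a $2\times2$ product in which two of the products are $a_1$ and $a_4$ and the other two cover $a_2$ and $a_3$, with the extra cross terms absorbed by $\langle\underline X^{a_1}\rangle$ and $\langle \underline X^{a_4}\rangle$ via the two inequalities.
\item Under \ref{mu4 c3}: take $J=\langle \underline X^{d},\underline X^{a_4-a_1+d}\rangle$ with $d=a_1\wedge a_2\wedge a_3$, and $K=\langle \underline X^{a_1-d},\underline X^{a_2-d},\underline X^{a_3-d}\rangle$. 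The additional products $a_4-a_1+a_2$ and $a_4-a_1+a_3$ are dominated by $a_3$ or $a_1$ according to subcases (i)--(iii). Non-negativity of $a_4-a_1+d$ has to be checked componentwise, as in Theorem~\ref{thm: 3-generated}.
\end{itemize}

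I expect the main obstacle to be the completeness of the necessity case analysis. There are many index patterns (the set partitions of a $4$-element set on each side, with injectivity), and for each I must show that the forced cross inequalities yield precisely one of (c1)--(c3) after relabeling. To keep this manageable I will first use the $J\leftrightarrow K$ symmetry and relabelings, and only then enumerate.
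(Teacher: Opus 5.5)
Your overall plan follows the paper: the same crossed-sum observation, a case split by the index patterns of $i\mapsto j_i$ and $i\mapsto k_i$, and the same explicit factorizations for (c1) and (c3). However, your sufficiency step for (c2) fails as described.

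\textbf{The (c2) construction.} Suppose $I=JK$ with $\mu(J)=\mu(K)=2$ and $\mu(I)=4$. Then the four products $\beta_j+\gamma_k$ are exactly the four exponents of $G(I)$, so there are no surplus cross terms to absorb. Moreover, the identity $(\beta_1+\gamma_1)+(\beta_2+\gamma_2)=(\beta_1+\gamma_2)+(\beta_2+\gamma_1)$ forces (c1). Hence a $2\times 2$ product can never handle an ideal satisfying (c2) but not (c1).

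For example, take $I=\langle X_1^4,X_1^2X_2,X_1X_2^2,X_2^3\rangle$ with $a_1=(2,1)$, $a_2=(4,0)$, $a_3=(0,3)$, $a_4=(1,2)$. Here $a_1+a_3\ge 2a_4$ and $a_2+a_4\ge 2a_1$, while none of the three pairings of the $a_i$ has equal sums.

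The factor that absorbs the cross terms needs three generators. With $d=a_1\wedge a_4$, the paper takes
\[ I=\langle \underline X^{a_1-d},\underline X^{a_4-d}\rangle\,\langle \underline X^{d},\underline X^{a_2-a_1+d},\underline X^{a_3-a_4+d}\rangle. \]
Its six products are $a_1,a_2,a_3,a_4$, together with $a_2+a_4-a_1\ge a_1$ and $a_1+a_3-a_4\ge a_4$. In the example this is $\langle X_1,X_2\rangle\langle X_1^3,X_1X_2,X_2^2\rangle$. You still need two checks:
\begin{enumerate}
\item $d>\underline 0$. If $d=\underline 0$, the second inequality of (c2) gives $a_2\ge a_1$.
\item $a_2-a_1+d\ge\underline 0$ and $a_3-a_4+d\ge\underline 0$, checked coordinatewise from the two inequalities.
\end{enumerate}
Your own necessity analysis points the same way: (c2) comes from the pattern $j_1=j_2\ne j_3=j_4$, $k_1=k_4$, $k_2\ne k_3$, where $K$ has three distinct $\gamma$'s.

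A minor slip in (c3): the surplus products $a_2+a_4-a_1$ and $a_3+a_4-a_1$ must \emph{dominate} one of $a_1,a_2,a_3$, not be dominated by them. In subcase (ii), one of them dominates $a_2$.

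\textbf{The necessity enumeration.} This also needs repair.
\begin{enumerate}
\item By the injectivity you noted, a $j$-partition of type $\{3,1\}$ forces $k_1,k_2,k_3$ to be distinct. So ``$\{3,1\}$ against $\{2,2\}$'' cannot occur.
\item (c3) arises only from $\{3,1\}$ against $\{2,1,1\}$, with $k_4$ equal to one of the other $k_i$. With the paper's labeling $j_1=j_2=j_3$, $k_4=k_1$, one of the four choices for where $\beta_{j_4}+\gamma_{k_2}$ and $\beta_{j_4}+\gamma_{k_3}$ land must be ruled out, since it forces $a_4\ge a_1$.
\item A $j$-partition of type $\{2,1,1\}$ never yields (c3). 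Against $\{2,1,1\}$ it yields either a contradiction or (c1), so your item 4 is not purely an exclusion.
\item The patterns $\{3,1\}$ and $\{2,2\}$ against four distinct $k_i$ are missing from your list. Both are excluded quickly by crossed sums.
\end{enumerate}
Normalizing $|\{j_i\}|\le|\{k_i\}|$ and splitting by the multiplicity pattern of the $j_i$, as the paper does, gives a complete list.
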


\begin{proof}
Suppose that $I$ is not an atom of $\Mon(R)$, i.e., $I=JK$ for some $J,K\in\Mon(R)\setminus\{R\}$. We use the notation introduced in Remark \ref{rem: E1 E2}: for every $i\in[1,4]$ we choose indices $j_i,k_i$ such that
\[
a_i=\beta_{j_i}+\gamma_{k_i}.
\]
Recall that whenever $j_r\neq j_s$, condition \ref{eq: JK in I} applied to the crossed sum $\beta_{j_r}+\gamma_{k_s}$ cannot hold for $a_s$, as this would contradict the minimality of $G(J)$. Similarly, whenever $k_r\neq k_s$, this sum cannot dominate $a_r$ due to the minimality of $G(K)$. Consequently, if $j_r\neq j_s$ and $k_r\neq k_s$, the crossed sum $\beta_{j_r}+\gamma_{k_s}$ must dominate one of the remaining generators. We will use this observation repeatedly without further mention.

Up to relabeling the $a_i$'s and the generators of $J$ and $K$, and possibly interchanging $J$ and $K$, we may assume $|\{j_1,j_2,j_3,j_4\}|\le|\{k_1,k_2,k_3,k_4\}|$. We then distinguish the following cases, depending on the cardinality and multiplicity pattern of the index set $\{j_i\}_{i=1}^4$.
\begin{enumerate}[label=\textup{(\arabic{*})}, mode=unboxed]
        \item\label{mu4 4000} $j_1 = j_2 = j_3 = j_4$;
        \item\label{mu4 3100}  $j_1=j_2=j_3\neq j_4$;
        
        \item\label{mu4 2200} $j_1=j_2\ne j_3=j_4$;
        
        \item\label{mu4 2110} $\lvert\{j_1,j_2,j_3,j_4\}\rvert = 3,\ j_1=j_2$;
        
        \item\label{mu4 1111} $\lvert\{j_1,j_2,j_3,j_4\}\rvert=4$.
\end{enumerate}
Recall that, by the minimality of $G(I)$, if $j_r=j_{s}$ for distinct $r,s\in[1,4]$, then $k_r\ne k_{s}$.

\smallskip
\textbf{Case \ref{mu4 4000}.} This case cannot occur, as it implies $a_1\wedge a_2\wedge a_3\wedge a_4 \ge \beta_{j_1} > \underline 0$, against our assumptions.

\smallskip
\textbf{Case \ref{mu4 3100}.} Suppose $j_1=j_2=j_3\neq j_4$. If $k_4\notin\{k_1,k_2,k_3\}$, then the crossed sum $\beta_{j_1}+\gamma_{k_4}$ cannot dominate any $a_i$, contradicting \ref{eq: JK in I}. Hence, after relabeling, we may assume $k_4=k_1$. Then $a_1\wedge a_2\wedge a_3\ge \beta_{j_1}>\underline 0.$ By \ref{eq: JK in I} there are $i\in \{1,3\}$ and $\ell\in\{1,2\}$ such that
\[\beta_{j_4}+\gamma_{k_2}\ge a_i \quad\text{ and }\quad \beta_{j_4}+\gamma_{k_3}\ge a_\ell.\] The choices $(i, \ell) = (3, 1), (1, 2), (1, 1)$ yield, respectively, \ref{mu4 c3}\ref{mu4 c31}, \ref{mu4 c3}\ref{mu4 c32}, and \ref{mu4 c3}\ref{mu4 c33}. The choice $(i,\ell)=(3,2)$ yields $a_4\ge a_1$, a contradiction.

\smallskip
\textbf{Case \ref{mu4 2200}.} Suppose that $j_1=j_2\neq j_3=j_4$. If the four $k_i$'s are distinct, the forced crossed inequalities imply simultaneously
\[
a_2+a_3\ge a_1+a_4,\qquad
a_2+a_4\ge a_1+a_3,
\]
and hence $a_2\ge a_1$, a contradiction. If, after relabeling, $k_1=k_4$ and $k_2\neq k_3$, then the crossed sums give
\[
a_1+a_3\ge 2a_4,\qquad a_2+a_4\ge 2a_1,
\]
which is condition \ref{mu4 c2}. Finally, if $k_1=k_4$ and $k_2=k_3$, then immediately
\[
a_1+a_3=a_2+a_4,
\]
which is condition \ref{mu4 c1}.

\smallskip
\textbf{Case \ref{mu4 2110}.} Assume that $j_1=j_2$ and $j_3,j_4$ are distinct from each other and from $j_1$. If the four $k_i$'s are distinct, a repeated application of the crossed-sum observation forces a chain of inequalities in which every alternative except one leads to a forbidden comparison between two of the $a_i$'s. In the remaining alternative, the inequalities are forced into equalities, yielding
\[
a_1\wedge a_2\wedge a_3\wedge a_4\ge \beta_{j_1}>\underline 0,
\]
a contradiction. If, on the other hand, the set $\{k_i\}_{i=1}^4$ has cardinality $3$, say $k_3=k_4\notin\{k_1,k_2\}$, then the crossed sum $\beta_{j_1}+\gamma_{k_3}$ cannot dominate any of the $a_i$, again a contradiction. In the remaining subcase, after relabeling, $k_2=k_3$ and $k_4\notin\{k_1,k_2\}$. In this case the crossed sums first give
\[
a_2+a_4\ge a_1+a_3.
\]
The remaining relevant crossed sum $\beta_{j_3} +\gamma_{k_1}$ cannot dominate $a_2$, otherwise $a_4\ge a_2$, thus it must dominate $a_4$ and the previous inequality becomes an equality:
\[
a_2+a_4=a_1+a_3.
\]
Thus condition \ref{mu4 c1} holds.

\textbf{Case \ref{mu4 1111}.} In this case, all the $j_i$'s are pairwise distinct, and hence so are all the $k_i$'s. For $i\neq \ell$, choose
\[
\nu(i,\ell)\in[1,4]\setminus\{i,\ell\}
\quad \text{ such that }\quad
\beta_{j_i}+\gamma_{k_\ell}\ge a_{\nu(i,\ell)}.
\]

We first claim that $\nu(i,\ell)=\nu(\ell,i)$
for all $i\neq \ell$. Indeed, if $\nu(i,\ell)=m$ and $\nu(\ell,i)=n$ are distinct, then $\{i,\ell,m,n\}=[1,4]$ and
\[
a_i+a_\ell
\ge a_m+a_n
\ge a_{\nu(m,n)}+a_{\nu(n,m)}.
\]
The possible choices for $\nu(m,n)$ and $\nu(n,m)$ either immediately contradict incomparability of the $a_i$'s, or force equality throughout. In the latter case one obtains identities of the form
\[
\beta_{j_i}+\gamma_{k_\ell}=a_m,\qquad
\beta_{j_i}+\gamma_{k_n}=a_\ell,\qquad
\beta_{j_i}+\gamma_{k_m}=a_n,
\]
and hence
\[
a_1\wedge a_2\wedge a_3\wedge a_4\ge \beta_{j_i}>\underline 0,
\]
again impossible. This proves the claim that $\nu(i,\ell)=\nu(\ell,i)$ for all $i\ne\ell$. It remains to consider the possible assignments of these values. Up to a relabeling of the indices, we may fix $\nu(1,2)$. A finite case analysis, repeatedly applying the crossed-sum observation, shows that every assignment either yields a forbidden comparison $a_r\ge a_s\qquad (r\ne s)$, or forces equality in a sum of crossed inequalities and consequently contradicts the minimality of $G(J)$ or $G(K)$. Therefore this case cannot occur. 

We have proved that if $I$ is not an atom, then one of \ref{mu4 c1}, \ref{mu4 c2}, \ref{mu4 c3} holds.

Conversely, suppose first that \ref{mu4 c1} holds, say $a_1+a_3=a_2+a_4.$ Set $d=a_1\wedge a_2$. Then
\[
I=
\langle \underline X^d, \underline X^{a_4-a_1+d}\rangle
\langle \underline X^{a_1-d},\underline X^{a_2-d}\rangle .
\]
Indeed, the four products give $a_1,a_2,a_4$, and $a_4-a_1+d+a_2-d=a_4-a_1+a_2=a_3.$ Moreover, the exponents involved are all $>\underline 0$: for instance, $d=\underline 0$ would imply $a_3\ge a_2$, and $a_4-a_1+d=\underline 0$ would imply $a_1\ge a_4$, both contradicting the minimality of $G(I)$.

Suppose next that \ref{mu4 c2} holds, say $a_1+a_3\ge 2a_4$ and $a_2+a_4\ge 2a_1.$ Set $d=a_1\wedge a_4$. Then
\[
I=
\langle \underline X^{a_1-d},\underline X^{a_4-d}\rangle
\langle \underline X^d,\underline X^{a_2-a_1+d},\underline X^{a_3-a_4+d}\rangle .
\]
The inequalities ensure that the extra products are absorbed:
\[
a_4+a_2-a_1\ge a_1,\qquad a_1+a_3-a_4\ge a_4.
\]
The same component-wise check as above shows that all displayed vector exponents are $>\underline 0$.

Finally, assume \ref{mu4 c3}. Put $d=a_1\wedge a_2\wedge a_3>\underline 0.$ Then
\[
I=
\langle \underline X^d,\underline X^{a_4-a_1+d}\rangle
\langle \underline X^{a_1-d},\underline X^{a_2-d},\underline X^{a_3-d}\rangle .
\]
The products with $\underline X^d$ give $a_1,a_2,a_3$, while the product $(a_4-a_1+d)+(a_1-d)$ gives $a_4$. The remaining two products have exponents
\[
a_4-a_1+a_2,\qquad a_4-a_1+a_3,
\]
and these dominate one of $a_1,a_2,a_3$ precisely by the alternatives listed in \ref{mu4 c3}. Thus they are absorbed by $I$. Again, the hypotheses and the incomparability of the $a_i$'s ensure that all exponents are $>\underline 0$. Hence in each case $I$ admits a nontrivial factorization, and so it is not an atom.
\end{proof}

\section{Density of atoms among \lq\lq small\rq\rq\ ideals}

Density questions for atoms have received comparatively little attention in factorization theory. Related results include zero-density phenomena for atoms in analytic Krull monoids \cite[Chapter~9]{Ge-HK} and in numerical semigroup algebras \cite{AEKOT22}, as well as recent density results for atoms in power monoids \cite{BiGe25, Agg-Got-Lu-2025}. Here we consider a different notion of density, tailored to monomial ideals. Using the results of Subsection \ref{char atom mu le 4}, we study atoms among monomial ideals $I$ of $R=K[X_1,\dots,X_N]$ with $\mu(I)\in [1,4]$ and $\gcd{G(I)}=1$. In particular, we show that for fixed $\mu\in[2,4]$, the corresponding density goes to $1$ as $N$ goes to infinity.

Let us first define what we mean by density. Given the number of variables of the underlying polynomial ring, $N$, and fixing the number of minimal generators of the ideals to be considered, $\mu$, we define, for any $M\in\mathbb N^+$, the set
\[ \mathcal T_M(N,\mu) := \{I\in\Mon_1(R)\colon \mu(I)=\mu, \forall \underline X^a\in G(I)\ a\le (M,\dots,M)\}. \]
If we further denote by $\mathcal A_M(N,\mu) $ the set $\mathcal T_M(N,\mu) \cap \mathscr A(\Mon(R))$, we can define the \emph{density} of atoms among $N$-variate monomial ideals with $\mu$ generators as the following limit, provided it exists
\[ d(N,\mu) = \lim_{M\to\infty} \frac{\lvert \mathcal A_M(N,\mu)\rvert}{\lvert \mathcal T_M(N,\mu)\rvert}.\]
For convenience, let us also introduce the set $\mathcal F_M(N,\mu):=\mathcal T_M(N,\mu)\setminus {\mathscr A(\Mon(R))}=\mathcal T_M(N,\mu)\setminus\mathcal A_M(N,\mu)$, so that 
\[ \frac{\lvert \mathcal A_M(N,\mu)\rvert}{\lvert \mathcal T_M(N,\mu)\rvert} = 1-\frac{\lvert \mathcal F_M(N,\mu)\rvert}{\lvert \mathcal T_M(N,\mu)\rvert}. \]

\begin{remark}
For $\mu\in\{1,2\}$, the density $d(N,\mu)$ can be determined explicitly from the definition. Indeed, $R$ is the unique element of $\Mon_1(R)$ with one minimal generator, and it is not an atom. Hence $d(N,1)=0$ for every $N\ge2$. On the other hand, Theorem~\ref{thm: 2-gen} shows that every ideal $I\in\Mon_1(R)$ with $\mu(I)=2$ is an atom of $\Mon(R)$, and therefore $d(N,2)=1$ for every $N\ge2$. Thus, $\mu=3$ is the first nontrivial case.
\end{remark}

\begin{remark}
One could define the sets $\mathcal T_M(N,\mu)$ (and consequently, $\mathcal A_M(N,\mu)$ and $\mathcal F_M(N,\mu)$) by considering ideals of $\Mon(R)$. However, since all ideals in $\Mon(R)\setminus\Mon_1(R)$ with $\mu\ge 2$ are non-atoms, it is easily seen that this choice would lead to $d(N,\mu)=0$ for any $N$ and any $\mu\ge 2$.
\end{remark}

\begin{theorem}\label{thm: d3}
    The limit $d(N,3)$ exists for every $N\ge 2$ , and
    \[ \lim_{N\to\infty} d(N,3) = 1.\]
    In particular, $d(2,3)=\frac34$.
\end{theorem}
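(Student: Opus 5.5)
The plan is to turn the counting problem into a volume computation. For each fixed $N$, the ratio $|\mathcal F_M(N,3)|/|\mathcal T_M(N,3)|$ should converge to a probability in a continuous model; then we evaluate or bound that probability. Throughout we count ordered triples $(a_1,a_2,a_3)$ of distinct vectors in $[0,M]^N$. Each ideal in $\mathcal T_M(N,3)$ corresponds to exactly $3!$ such triples, so the ratio is unchanged.

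\emph{Step 1: decomposition by zero pattern.} The condition $\gcd(G(I))=1$ means $a_1\wedge a_2\wedge a_3=\underline 0$, i.e.\ every coordinate $k$ has at least one index $i$ with $a_i(k)=0$. Triples in which some coordinate has two or more zero entries are $O(M^{2N-1})$ in number. So, up to negligible error, every triple determines a map $c\colon[1,N]\to\{1,2,3\}$ with $a_{c(k)}(k)=0$, and all remaining $2N$ entries lie in $[1,M]$. For fixed $c$, the incomparability conditions $a_i\perp a_j$ and the non-atom condition of Theorem~\ref{thm: 3-generated} (some $a_{\sigma(1)}+a_{\sigma(2)}\ge 2a_{\sigma(3)}$) are finite unions and intersections of linear inequalities in the $2N$ free entries. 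The number of lattice points in $[1,M]^{2N}$ satisfying them is therefore $v\,M^{2N}+O(M^{2N-1})$ by a standard Riemann-sum (or lattice-point-in-polytope) argument. Here $v$ is the volume of the corresponding region in $[0,1]^{2N}$, and ties and boundaries contribute only lower order. Summing over the finitely many $c$ gives both $|\mathcal T_M|$ and $|\mathcal F_M|$ as $(\text{const})\,M^{2N}+O(M^{2N-1})$, with positive leading constant for $|\mathcal T_M|$. This proves that $d(N,3)$ exists.

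\emph{Step 2: the constants.} Fix $c$ and treat the free entries as independent uniform variables on $[0,1]$. If $c$ is surjective, every $a_j$ has a zero in a coordinate where the others are almost surely positive, so pairwise incomparability holds with probability $1$. If $c$ misses some index $j$, incomparability cuts the weight of that pattern by a factor in $[0,1]$.

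For the non-atom event $a_{\sigma(1)}+a_{\sigma(2)}\ge 2a_{\sigma(3)}$, write $s=\sigma(3)$. Coordinates with $c(k)=s$ are automatic. Each coordinate with $c(k)\ne s$ requires $x\le y/2$ for two independent uniforms, which has probability $1/4$. Hence, for surjective $c$, the conditional non-atom probability is at most
\[
\sum_{s=1}^3 4^{-(N-n_s)},\qquad n_s=|c^{-1}(s)|.
\]

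\emph{Step 3: $N\to\infty$.} Non-surjective patterns have total weight at most $3\cdot 2^N$ against at least $3^N-3\cdot 2^N$ surjective patterns of weight $1$, so their share is $O((2/3)^N)$. Averaging the bound of Step 2 over uniformly random $c$ gives
\[
\sum_s \mathbb E\bigl[4^{-(N-n_s)}\bigr]=3\Bigl(\tfrac13+\tfrac23\cdot\tfrac14\Bigr)^N=3\cdot 2^{-N}.
\]
Dividing by the surjective share (which tends to $1$) keeps this of order $2^{-N}$. Hence $1-d(N,3)=O((2/3)^N)\to 0$.

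\emph{Step 4: $N=2$.} No $c$ is surjective. The constant patterns force some $a_i=\underline 0$, i.e.\ $I=R$, and are excluded. The only patterns are, up to symmetry, $a_1=(0,y_1)$, $a_2=(x_2,0)$, $a_3=(x_3,y_3)$ with $x_3,y_3>0$. Incomparability amounts to $x_3<x_2$ and $y_3<y_1$.
\begin{itemize}
\item Conditioned on this, the choice $\sigma(3)=3$ gives the event $x_3\le x_2/2,\ y_3\le y_1/2$, of probability $1/4$.
\item The choice $\sigma(3)=1$ would need $y_1\le y_3/2$, and $\sigma(3)=2$ would need $x_2\le x_3/2$; both contradict incomparability.
\end{itemize}
All admissible patterns behave identically, so $d(2,3)=1-\tfrac14=\tfrac34$.

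The main obstacle is making Step 1 rigorous cleanly. One must justify that the discrete counts of the non-atom regions, which are unions over $\sigma$ of regions cut out by non-strict inequalities intersected with strict incomparability conditions, are $vM^{2N}+O(M^{2N-1})$. One must also check that the degenerate configurations (several zeros in one coordinate, or equalities among entries) are negligible. I would handle this by bounding each degenerate configuration by fixing one extra linear equation, which costs a factor $M$.
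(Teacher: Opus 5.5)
Your proposal is correct and follows essentially the same route as the paper. Both arguments decompose by zero schemes, discard non-dominant schemes as $O(M^{2N-1})$, and use lattice-point (Ehrhart-type) asymptotics for existence. Both get the lower bound $\tau(N)\ge 3^N-3\cdot 2^N+3$ from surjective schemes, use a per-coordinate factor $\tfrac14$ that summed over all schemes gives exactly the paper's bound $\phi(N)\le 3(3/2)^N$, and compute $d(2,3)=\tfrac34$ the same way.

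The one difference is that you treat non-surjective schemes separately and crudely, which yields only $1-d(N,3)=O((2/3)^N)$. Your Step~2 bound ignores incomparability, so it holds for every scheme; summing it over all $c$ at once, as the paper does, gives the sharper rate $O(2^{-N})$ directly.
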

\begin{proof}
Since we are concerned only with ideals with $\mu=3$ minimal generators, we omit the dependence on $\mu$ throughout the proof; e.g., we denote $d(N,3)$ simply by $d(N)$. We first compute the value of $d(2)$. Let
\[ I\in \mathcal T_M(2)\text{ with } G(I)=\{\underline X^{a_1},\underline X^{a_2},\underline X^{a_3}\},\]
where $a_i=(b_i,c_i)\in[0,M]^2\setminus\{(0,0)\}$. Since $I\in\Mon_1(R)$ and $\mu(I)=3$, the exponents are pairwise incomparable and satisfy
\[ b_1b_2b_3=0,\qquad c_1c_2c_3=0.\]
Up to a relabeling of the three generators, every such triple is uniquely of the form
\[ a_1=(x,0),\qquad a_2=(u,v),\qquad a_3=(0,y),\]
with
\[ 1\le u<x\le M,\qquad 1\le v<y\le M.\]
Therefore, we have
\[ |\mathcal T_M(2)|=\binom{M}{2}^2=\frac14M^4 + O(M^3).\]

By Theorem \ref{thm: 3-generated}, an ideal of the above form is not an atom if and only if
\[ a_1+a_3\ge 2a_2, \]
(it is readily checked that neither $a_1+a_2\ge 2a_3$ nor $a_2+a_3\ge 2a_1$ can occur), which is equivalent to
\[ x\ge 2u,\qquad y\ge 2v.\]
The number of pairs $(u,x)\in [1,M]^2$ satisfying $x\ge 2u$ is
\[\sum_{u=1}^{\lfloor M/2\rfloor}(M-2u+1) = \frac14M^2+O(M).\]
An analogous formula holds for the pairs $(v,y)$ satisfying $y\ge 2v$, whence 
\[ |\mathcal F_M(2)| = \left(\frac14M^2+O(M)\right)^2 = \frac1{16}M^4+O(M^3).\]

We therefore obtain
\[ \lim_{M\to\infty} \frac{|\mathcal F_M(2)|}{|\mathcal T_M(2)|} = \frac{1/16}{1/4} = \frac14, \]
and consequently
\[ d(2)=1-\frac14=\frac34.\]

We now prove the existence of the limit $d(N)$ for every $N\ge 2$ and show that $d(N)\to 1$ as $N\to\infty$. It will be convenient to count ordered triples of exponent vectors; this does not change the relevant ratios, since in this way every ideal is counted exactly $3!$ times. With a slight abuse of notation, we continue to denote the corresponding counts by $|\mathcal T_M(N)|$ and $|\mathcal F_M(N)|$. Fix the number of variables $N$. Given an ordered triple
\[(a_1,a_2,a_3)\in [0,M]^N\times [0,M]^N\times [0,M]^N\]
satisfying
\begin{equation}\label{densita 3 dom}
    a_1(i)a_2(i)a_3(i)=0\qquad\text{for}\ i\in[1,N],
\end{equation}
we define its \emph{zero scheme} (or \emph{scheme}, for short) to be
\[ Z=(Z_1,\dots,Z_N),\ \text{where}\ Z_i:=\{k\in [1,3]:a_k(i)=0\}.\]
Each $Z_i$ is a nonempty subset of $[1,3]$, so there are $(2^3-1)^N=7^N$ schemes in total. For a fixed scheme $Z$, the number of positive entries in any ordered triple satisfying \eqref{densita 3 dom} with scheme $Z$ is
\[ p(Z) := \sum_{i=1}^N(3-|Z_i|). \]
Thus, by the independence of the coordinates, the number of such triples is exactly $M^{p(Z)}$. Note  that $p(Z)\le 2N$, with equality holding if and only if $|Z_i|=1$ for every $i\in [1,N]$; we call such schemes \emph{dominant}. A non-dominant scheme has $|Z_i|\ge 2$ for at least one $i$, so that $p(Z)\le 2N-1$. Since there are at most $7^N$ schemes in total, the number of ordered triples satisfying \eqref{densita 3 dom} with non-dominant schemes is at most $7^N M^{2N-1}$.

For each fixed dominant scheme $Z$, the incomparability conditions (other than \eqref{densita 3 dom}) and the non-atomicity conditions of Theorem \ref{thm: 3-generated} are given by finitely many homogeneous linear (in)equalities among the nonzero entries $a_k(i)$, combined disjunctively where existential quantifiers occur. Thus, the cardinalities of $\mathcal T_M(N)$ and $\mathcal F_M(N)$ restricted to the scheme $Z$ correspond to the number of lattice points (i.e., of points with integer coordinates) in some Boolean combinations (unions, intersections and complements) of finitely many hyperplanes and half-spaces in $[0,M]^{2N}\subseteq \mathbb R^{2N}$ (here $[0,M]$ is an interval of $\mathbb R$). By decomposing these regions into finitely many rational polytopes and the standard lattice-point estimate by Ehrhart for such polytopes (see, for instance, \cite[Theorem 3.23, Exercise 3.34]{Be-Ro}), both cardinalities have asymptotic expansions of the form
\[ c_Z M^{2N}+O(M^{2N-1}),\]
where the nonnegative constant $c_Z$ does not depend on $M$. Summing over all dominant schemes, and recalling that non-dominant schemes contribute at most $7^NM^{2N-1}$ to the total count (as noted above), we conclude that both $|\mathcal T_M(N)|$ and $|\mathcal F_M(N)|$ have asymptotic expansions of the form
$$|\mathcal T_M(N)|=\tau(N)\,M^{2N}+O(M^{2N-1})\quad \text{ and }\quad |\mathcal F_M(N)|=\phi(N)\,M^{2N}+O(M^{2N-1}),$$
where $\tau(N)=\sum_{Z\text{ dominant}}c_Z^T$ and $\phi(N)=\sum_{Z\text{ dominant}}c_Z^F$. Here, $c_Z^T$ and $c_Z^F$ denote the leading coefficients obtained in the previous step for the cardinalities of $\mathcal T_M(N)$ and $\mathcal F_M(N)$ restricted to the dominant scheme $Z$, respectively.
 
Note that a dominant scheme $Z$ can also be seen as a function
\[ f=f_Z:[1,N]\to[1,3],\]
where $f(i)=k$ if and only if $a_k(i)=0$ and $a_j(i)>0$ for $j\in [1,3]\setminus\{k\}$. If $f$ is surjective, then every triple $(a_1,a_2,a_3)\in([0,M]^N)^3$ with scheme $f$ automatically satisfies $a_1,a_2,a_3\neq 0$ and incomparability: for every pair $j\ne k$, there is a coordinate $i\in [1,N]$ in which $a_j(i)=0<a_k(i)$ and a coordinate $\ell$ in which $a_k(\ell)=0<a_j(\ell)$. Thus, every triple with a dominant surjective scheme actually corresponds to a monomial ideal with $\mu=3$. This shows, in particular, that the leading coefficient of $|\mathcal T_M(N)|$ is positive for every $N\ge 3$ (for which there exists at least a surjective $f$); the positivity of $\tau(2)$ follows from the explicit computation above. Therefore, the limit $d(N)=1-\phi(N)/\tau(N)$ exists for every $N\ge 2$.

It remains to prove that $d(N)\to 1$. We will estimate $\tau(N)$ from below and $\phi(N)$ from above. As triples $(a_1,a_2,a_3)$ with dominant surjective schemes correspond to monomial ideals with $3$ minimal generators, the coefficient $\tau(N)$ is at least the number of surjective functions from $[1,N]$ to $[1,3]$. By the inclusion-exclusion principle, we obtain
\begin{equation}\label{tau3 lower bound}
    \tau(N)\ge 3^N-3\cdot 2^N+3.
\end{equation}

To find an upper bound for $\phi(N)$, let us fix one of the conditions given in Theorem \ref{thm: 3-generated} guaranteeing non-atomicity, say
\begin{equation}\label{non 3atom count}
    a_1+a_2\ge 2a_3.
\end{equation}
We bound from above the number of triples $(a_1,a_2,a_3)\in([0,M]^N)^3$ satisfying this condition by ignoring the incomparability requirement and summing over all dominant schemes $f\colon [1,N]\to [1,3]$. When $f(i)=1$, in the coordinate $i$ condition \eqref{non 3atom count} reads
\[ a_2(i)\ge 2a_3(i).\]
 We have already seen that the number of pairs $(x,y)\in[1,M]^2$ such that $x\ge 2y$ is $\frac14M^2+O(M)$. The same computation holds when $f(i)=2$. If $f(i)=3$, condition \eqref{non 3atom count} in the coordinate $i$ is automatic. Summing over all dominant schemes $f$, the total number of triples $(a_1,a_2,a_3)$ satisfying \eqref{densita 3 dom} and \eqref{non 3atom count} is at most
 \begin{align*}
     & \sum_{f\colon [1,N]\to[1,3]} \biggl(\frac14 M^2+O(M)\biggr) ^{\lvert f^{-1}(1)\rvert} \biggl(\frac14 M^2+O(M)\biggr)^{\lvert f^{-1}(2)\rvert} (M^2)^{\lvert f^{-1}(3)\rvert} =\\
     &= \sum_{n_1+n_2+n_3=N} \binom{N}{n_1,n_2,n_3} \biggl(\frac14 M^2+O(M)\biggr)^{n_1}\biggl(\frac14 M^2+O(M)\biggr)^{n_2} (M^2)^{n_3} =\\
     &=\left[\left(\frac14 M^2+O(M)\right)+\left(\frac14 M^2+O(M)\right)+ M^2\right]^N=\\
     &= \biggl( \frac14+\frac14+1\biggr)^N M^{2N} + O(M^{2N-1}) =\\ 
     &=\biggl(\frac32\biggr)^N M^{2N} + O(M^{2N-1}). \\
 \end{align*}
Since there are three conditions of the form \eqref{non 3atom count} to consider, we conclude that
\begin{equation}\label{phi3 upper bound}
    \phi(N)\,M^{2N}+O(M^{2N-1})\le 3\left[ \biggl(\frac32\biggr)^N M^{2N} + O(M^{2N-1})\right] \quad\text{ so } \quad \phi(N)\le 3\biggl(\frac32\biggr)^N.
\end{equation}

Putting estimates \eqref{tau3 lower bound} and \eqref{phi3 upper bound} together, we obtain
\[ 1-d(N) = \lim_{M\to\infty}\frac{\lvert \mathcal F_M(N)\rvert}{\lvert \mathcal T_M(N)\rvert} = \frac{\phi(N)}{\tau(N)} \le \frac{3(3/2)^N}{3^N-3\cdot 2^N+3}.\]
As the right-hand side tends to $0$ for $N\to\infty$, we conclude that
\[ \lim_{N\to\infty}d(N)=1.\]
\end{proof}

The next result focuses on the density $d(N,\mu)$ when $\mu=4$. The proof technique is similar to that of Theorem \ref{thm: d3}; for this reason, we will avoid unnecessary repetitions.
\begin{theorem}
    The limit $d(N,4)$ exists for every $N\ge 2$, and
    \[ \lim_{N\to\infty} d(N,4) = 1.\]   
\end{theorem}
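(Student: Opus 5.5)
We follow the scheme-counting strategy of Theorem~\ref{thm: d3}, with Theorem~\ref{thm: 4-generated} replacing Theorem~\ref{thm: 3-generated}. We count ordered quadruples $(a_1,a_2,a_3,a_4)\in([0,M]^N)^4$; each ideal is then counted exactly $4!$ times, which does not affect the ratios. The gcd condition says that for each $i$ some $a_k(i)$ vanishes. So we define the zero scheme $Z=(Z_1,\dots,Z_N)$ with $\emptyset\ne Z_i\subseteq[1,4]$. There are $15^N$ schemes, and a scheme $Z$ supports $M^{p(Z)}$ quadruples, where $p(Z)=\sum_i(4-|Z_i|)\le 3N$.

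A scheme is called dominant when $|Z_i|=1$ for all $i$. Non-dominant schemes contribute at most $15^N M^{3N-1}$ quadruples in total. For a fixed dominant scheme, incomparability and each of the conditions \ref{mu4 c1}, \ref{mu4 c2}, \ref{mu4 c3} (over all relabelings) are finite Boolean combinations of homogeneous linear (in)equalities in the positive entries. This includes the condition $a_1\wedge a_2\wedge a_3>\underline 0$, which is a disjunction over coordinates $i$ of the open condition that $a_1(i),a_2(i),a_3(i)$ are all positive. Ehrhart-type estimates, exactly as in the $\mu=3$ case, then give
\[
|\mathcal T_M(N)|=\tau(N)M^{3N}+O(M^{3N-1}),\qquad |\mathcal F_M(N)|=\phi(N)M^{3N}+O(M^{3N-1}).
\]
For $N\ge4$, a surjective dominant scheme $f\colon[1,N]\to[1,4]$ automatically yields pairwise incomparable nonzero vectors, so $\tau(N)>0$.

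For $N=2,3$ no surjective $f$ exists, and this is the main obstacle. Here I would exhibit an explicit open cone of quadruples with full-dimensional support. For $N=2$, take the staircase $(x_1,0),(x_2,y_2),(x_3,y_3),(0,y_4)$ with $x_1>x_2>x_3>0$ and $0<y_2<y_3<y_4$. Its $6$ free parameters equal $3N$, so it gives $\tau(2)\ge c>0$. For $N=3$, take an analogous configuration: choose a dominant $f$ hitting three of the four indices, and let the fourth vector be positive in all coordinates yet incomparable to the others via open inequalities. Once $\tau(N)>0$ is established, $d(N,4)=1-\phi(N)/\tau(N)$ exists.

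For the asymptotics, the number of surjections from $[1,N]$ to $[1,4]$ gives
\[
\tau(N)\ge 4^N-4\cdot3^N+6\cdot2^N-4.
\]
For $\phi(N)$ we bound, over all dominant $f\colon[1,N]\to[1,4]$, each labelled instance of each condition separately. Each labelled instance is a conjunction of coordinatewise inequalities, so its count factorizes over coordinates as $\bigl(\sum_{k=1}^4 w_k\bigr)^N M^{3N}+O(M^{3N-1})$. Here $w_k$ is the density of triples in $[1,M]^3$ satisfying the coordinate restriction of the condition when $a_k(i)=0$. It therefore suffices to show $\sum_k w_k<4$, i.e.\ at least one $w_k<1$, for each condition.

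\begin{itemize}
\item For \ref{mu4 c1}, $a_1+a_3=a_2+a_4$ is an equality in every coordinate, so the count is $O(M^{3N-1})$ and contributes nothing.
\item For \ref{mu4 c2}, the requirement $a_1+a_3\ge2a_4$ at a coordinate with $a_1(i)=0$ gives $w_1\le\frac12$.
\item For \ref{mu4 c3}, we drop the wedge condition and keep one inequality such as $a_2+a_4\ge 2a_1$. It has density $<1$ when $a_2(i)=0$ or $a_4(i)=0$.
\end{itemize}

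Summing over the finitely many labelled conditions (at most $4!$ relabelings times five conditions) gives $\phi(N)\le C\,\rho^N$ with some $\rho<4$. Then $\phi(N)/\tau(N)\to0$, and hence $d(N,4)\to1$.
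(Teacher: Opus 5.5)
Your proposal is correct and follows essentially the same route as the paper's proof. The shared steps are: zero schemes, with only dominant schemes contributing to the $M^{3N}$ term; Ehrhart-type expansions on each dominant scheme; explicit full-dimensional configurations for $N=2,3$; the surjection count $4^N-4\cdot 3^N+6\cdot 2^N-4$ as a lower bound for $\tau(N)$; and a coordinatewise factorized upper bound for $\phi(N)$. The configurations match as well: the paper uses exactly your staircase for $N=2$, and for $N=3$ three vectors with one zero each plus a positive fourth vector kept incomparable by $c_1>c_4$, $b_2>b_4$, $d_3>d_4$. The only differences are cosmetic: the paper makes your $\rho<4$ explicit as $\rho=\frac{13}{4}$ via a per-coordinate density $\frac14$ for one index in each of (c2) and (c3), and your bound $w_1\le\frac12$ for (c2) is valid but loose (the true value is $\frac14$).
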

\begin{proof}
    We omit the dependence on $\mu=4$ throughout the proof. First, we show that $d(N)$ exists for every $N\ge 2$. As in the proof of Theorem \ref{thm: d3}, we count ordered quadruples of exponent vectors rather than ideals and we continue to denote by $|\mathcal T_M(N)|$ and $|\mathcal F_M(N)|$ the corresponding counts. As every monomial ideal with $\mu=4$ corresponds to exactly $4!$ ordered quadruples, this only rescales numerator and denominator of $\lvert \mathcal A_M(N)\rvert/\lvert\mathcal T_M(N)\rvert$ by $4!$, leaving the ratio (and hence $d(N)$) unchanged. Fix $N\ge 2$. For an ordered quadruple
    \[ (a_1,a_2,a_3,a_4)\in ([0,M]^N)^4,\]
    satisfying
    \begin{equation}\label{densita 4 dom}
        a_1\wedge a_2\wedge a_3\wedge a_4 = \underline 0,
    \end{equation}
    define its (\emph{zero}) \emph{scheme} as the $N$-tuple
    \[ Z=(Z_1,\dots,Z_N), \qquad Z_i:=\{k\in[1,4]:a_k(i)=0\}. \]
    Each $Z_i$ is a nonempty subset of $[1,4]$ by \eqref{densita 4 dom}, so there are $15^N$ schemes. For a fixed scheme $Z$, the number of positive entries in any ordered quadruple satisfying \eqref{densita 4 dom} is
    \[ p(Z):=\sum_{i=1}^N(4-|Z_i|). \]
    Thus, the number of quadruples with scheme $Z$ is exactly $M^{p(Z)}$. We have $p(Z)\le 3N$, with equality holding if and only if $|Z_i|=1$ for every $i\in [1,N]$. In this case, we call a scheme \emph{dominant}. The number of quadruples with non-dominant schemes is at most $15^N M^{3N-1}$, whence they do not affect the coefficient of $M^{3N}$ in the cardinality of either $\mathcal T_M(N)$ or $\mathcal F_M(N)$. For each fixed dominant scheme $Z$, the incomparability condition and the conditions \ref{mu4 c1}-\ref{mu4 c3} from Theorem \ref{thm: 4-generated} are expressed by finitely many homogeneous linear (in)equalities. Again, by \cite[Theorem 3.23, Exercise 3.34]{Be-Ro}, it follows that $\mathcal T_M(N)$ and $\mathcal F_M(N)$ have asymptotic expansions
    \[ \lvert\mathcal T_M(N)\rvert=\tau(N)M^{3N}+O(M^{3N-1}),\qquad \lvert\mathcal F_M(N)\rvert=\phi(N)M^{3N}+O(M^{3N-1}), \]
    where the coefficients $\tau(N),\phi(N)$ are nonnegative and independent of $M$.
    Note that dominant schemes correspond bijectively to functions
    \[ f\colon [1,N]\to [1,4], \]
    where $f(i)=k$ if and only if $a_k(i)=0$ and $a_j(i)>0$ for any $j\in [1,4]\setminus\{k\}$. Every ordered quadruple with a dominant surjective scheme satisfies $a_k\neq \underline 0$ for every $k\in [1,4]$ and $a_k\perp a_j$ for $k\neq j$, hence it determines a monomial ideal with four minimal generators. Consequently, $\tau(N)>0$ for every $N\ge 4$ (for which a surjective $f$ exists). For $N=2$, note that any $6$-uple $(b_1,b_2,c_2,b_3,c_3,c_4)\in [1,M]^6$ such that
    \begin{equation}\label{eq: mu4 n2 esiste}
        b_3<b_2<b_1 \quad\text{and}\quad c_2<c_3<c_4
    \end{equation}
    corresponds to an ideal in $\mathcal T_M(2)$, namely
    \[ I=\langle X_1^{b_1},X_1^{b_2}X_2^{c_2},X_1^{b_3}X_2^{c_3},X_2^{c_4}\rangle,\]
    and this correspondence is clearly injective. The number of elements of $[1,M]^6$ satisfying \eqref{eq: mu4 n2 esiste} is
    \[ \binom{M}{3}^2 = \frac{M^2(M-1)^2(M-2)^2}{36} = \frac{M^6}{36}+O(M^5),\]
    whence $\tau(2)\ge 1/36>0$. For $N=3$, note that any $9$-uple $(b_1,c_1,b_2,d_2,c_3,d_3,b_4,c_4,d_4)\in [1,M]^9$ such that
    \begin{equation}\label{eq: mu4 n3 esiste}
        c_1>c_4,\ b_2>b_4 \quad\text{and}\quad d_3>d_4
    \end{equation}
    corresponds to an ideal in $\mathcal T_M(3)$, namely
    \[ I=\langle X_1^{b_1}X_2^{c_1},X_1^{b_2}X_3^{d_2},X_2^{c_3}X_3^{d_3},X_1^{b_4}X_2^{c_4}X_3^{d_4}\rangle,\]
    and this correspondence is clearly injective. The number of elements of $[1,M]^9$ satisfying \eqref{eq: mu4 n3 esiste} is
    \[ M^3\binom{M}{2}^3 = \frac{M^6(M-1)^3}{8} = \frac{M^9}{8}+O(M^8), \]
    whence $\tau(3)\ge 1/8>0$. As $\tau(N)>0$ for every $N\ge 2$, we conclude that $d(N)$ exists for every $N\ge 2$ and equals \[ d(N)=1-\frac{\phi(N)}{\tau(N)}. \]
    
    It remains to show that $\phi(N)/\tau(N)\to 0$. As ordered quadruples with dominant surjective schemes correspond to monomial ideals, the inclusion-exclusion principle yields
    \[ \tau(N)\ge 4^N-4\cdot3^N+6\cdot2^N-4. \]
    We now estimate the coefficient $\phi(N)$ from above, by considering the conditions for non-atomicity of Theorem \ref{thm: 4-generated}. Note that condition \ref{mu4 c1} cannot contribute to the $M^{3N}$-term in the asymptotic expansion of $\lvert \mathcal F_M(N)\rvert$, since in every coordinate it imposes one nontrivial linear equality among the three nonzero entries. Consider any fixed relabeling of any one of the conditions \ref{mu4 c2}, \ref{mu4 c3}-\ref{mu4 c31}, \ref{mu4 c3}-\ref{mu4 c32} or \ref{mu4 c3}-\ref{mu4 c33}, and denote it by $(\ast)$. We ignore the incomparability assumption and, in condition \ref{mu4 c3}, we also ignore the requirement $a_1\wedge a_2\wedge a_3> \underline 0$. For any given coordinate $i\in [1,N]$, set $[1,4]\setminus\{k\} = \{r,s,t\}$ with $r<s<t$ and let $F_k$ denote the number of triples $(a_r(i),a_s(i),a_t(i))\in [1,M]^3$ such that condition $(\ast)$ holds in the coordinate $i$ when $f(i)=k$. Note that $F_k$ is well-defined, as it does not depend on $i$ but only on $f(i)$. No matter what condition $(\ast)$ is, it is easily checked that there is a $k\in[1,4]$ such that
    \[ F_k \le \frac14 M^3+O(M^2).\]
    For instance, for the condition
    \[ a_1+a_3\ge 2a_4,\qquad a_2+a_4\ge 2a_1, \]
    when $f(i)=3$ the first inequality reads $a_1(i)\ge 2a_4(i)$ in the coordinate $i$, whence $F_3\le \frac14 M^3+O(M^2)$ (even ignoring the second inequality). For $j\in [1,4]\setminus\{k\}$, we estimate trivially $F_j\le M^3$ to obtain
    \[ F_1+F_2+F_3+F_4 \le \biggl(1+1+1+\frac14\biggr)M^3+O(M^2) = \frac{13}{4} M^3+O(M^2).\]
    Summing over all dominant schemes, we conclude that the number of quadruples $(a_1,a_2,a_3,a_4)$ satisfying \eqref{densita 4 dom} and $(\ast)$ is at most
    \begin{align*}
        &\sum_{f\colon [1,N]\to[1,4]} \prod_{k=1}^4 F_k^{\lvert f^{-1}(k)\rvert} = \\
        = &\sum_{n_1+n_2+n_3+n_4=N} \binom{N}{n_1,n_2,n_3,n_4} \prod_{k=1}^4 F_k^{n_k} = \\
        = &(F_1+F_2+F_3+F_4)^N \\
        \le &\biggl(\frac{13}{4}\biggr)^N M^{3N}+ O(M^{3N-1}).
    \end{align*}
     Summing over all the possible choices of $(\ast)$, i.e., over all the relabelings of \ref{mu4 c2}, \ref{mu4 c3}-\ref{mu4 c31}, \ref{mu4 c3}-\ref{mu4 c32} and \ref{mu4 c3}-\ref{mu4 c33} (ignoring incomparability and $a_1\wedge a_2\wedge a_3> \underline 0$), we obtain
     \[ \phi(N)\le C\biggl(\frac{13}{4}\biggr)^N,\]
     where $C$ is the total number of possible choices for $(\ast)$, a positive integer independent of $M$ and $N$. Combining this with the lower bound for $\tau(N)$, we get
     \[ 1-d(N) = \lim_{M\to\infty}\frac{|\mathcal F_M(N)|}{|\mathcal T_M(N)|} = \frac{\phi(N)}{\tau(N)} \le \frac{C(13/4)^N}{4^N-4\cdot3^N+6\cdot2^N-4}. \]
     As the right-hand side tends to $0$ for $N\to\infty$, we finally obtain
     \[ \lim_{N\to\infty}d(N)=1. \]
\end{proof}

\section{A further density result for bivariate monomial ideals}
We conclude the paper with a density result that is, in a sense, \lq\lq orthogonal\rq\rq\ to those in the previous section. Let $\mathcal E(R)$ be the smallest divisor-closed submonoid of $\Mon(R)$ containing all the equigenerated ideals in $\Mon_1(R)$. As $\Mon_1(R)$ is divisor-closed in $\Mon(R)$, it is clear that $\mathcal E(R)\subseteq \Mon_1(R)$.

\begin{remark}\label{rmk: mon equigen ez}
    For $m\in\mathbb N$, denote by $G_m(I)$ the set of minimal generators of $I$ having degree $m$. Let
    \[\mathcal N(R) := \{I\in\Mon(R)\colon  \gcd(G_{\mdeg(I)}(I)) = 1\}. \]
    Since $\gcd(G(I))\mid \gcd(G_{\mdeg(I)}(I)),$ we have $\mathcal N(R)\subseteq \Mon_1(R)$. For $J,K \in \Mon(R)$, set
\[
K_1 := \langle G_{\mdeg(J)}(J)\rangle, \qquad K_2 := \langle G_{\mdeg(K)}(K)\rangle.
\]
By additivity of the min-degree ($\mdeg(JK) = \mdeg(J)+\mdeg(K)$), and since any product involving a generator of $J$ or $K$ of degree strictly larger than the minimal one has degree strictly larger than $\mdeg(JK)$, one obtains
\begin{equation}\label{eq: factor equigen}
        \langle G_{\mdeg(JK)}(JK)\rangle = K_1 \cdot K_2.
    \end{equation}
Moreover, $G_{\mdeg(J)}(J)$ is a subset of the antichain $G(J)$, hence is itself an antichain, and therefore coincides with $G(K_1)$ (similarly, $G_{\mdeg(K)}(K) = G(K_2)$). Thus, if $J,K\in \mathcal N(R)$, i.e. $\gcd(G(K_1)) = \gcd(G_{\mdeg(J)}(J)) = 1$ and $\gcd(G(K_2)) = \gcd(G_{\mdeg(K)}(K)) = 1$, by applying Lemma~\ref{lem: gcd IJ} to the pair $K_1,K_2$ we get:
\begin{equation*}
\gcd(G(K_1K_2)) = \gcd(G(K_1)) \cdot \gcd(G(K_2)) = 1,
\end{equation*}
i.e., $JK\in \mathcal{N}(R)$ as $G(K_1K_2)=G_{\mdeg(JK)}(JK)$ by \eqref{eq: factor equigen}. Thus $\mathcal N(R)$ is a submonoid of $\Mon_1(R)$. Moreover, it is divisor-closed in $\Mon(R)$. Indeed, if $JK\in\mathcal N(R)$, then necessarily also $\gcd(G_{\mdeg(J)}(J)) = \gcd(G_{\mdeg(K)}(K)) = 1$, again by \eqref{eq: factor equigen} and Lemma \ref{lem: gcd IJ}.

\medskip

Finally, observe that $\mathcal E(R)\subseteq\mathcal N(R)$. In fact, every equigenerated ideal $I\in\Mon_1(R)$ trivially satisfies $\gcd(G(I))=\gcd(G_{\mdeg(I)}(I))=1$, as $G(I)=G_{\mdeg(I)}(I)$. Thus, such ideals belong to $\mathcal N(R)$, which we have proved to be divisor-closed. Since $\mathcal E(R)$ is the smallest divisor-closed submonoid of $\Mon(R)$ containing them, clearly $\mathcal E(R)\subseteq\mathcal N(R)$.
\end{remark}

From now on, we will focus on the bivariate case, as for $R=K[X,Y]$ we have a very concrete description of $\mathcal E(R)$.

\begin{proposition}\label{prop: mon equigen bivar}
    Let $R=K[X,Y]$. Then we have
    \[ \mathcal E(R) = \{ I\in\Mon(R)\colon X^{\mdeg(I)},Y^{\mdeg(I)}\in G(I) \}. \]
\end{proposition}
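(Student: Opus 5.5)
Let $\mathcal S:=\{I\in\Mon(R)\colon X^{\mdeg(I)},Y^{\mdeg(I)}\in G(I)\}$. I will prove $\mathcal E(R)=\mathcal S$ in three steps: show that $\mathcal S$ is a submonoid, that it is divisor-closed, and that every element of $\mathcal S$ divides an equigenerated ideal of $\Mon_1(R)$.

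\textbf{Step 1: $\mathcal S$ is a submonoid.} Clearly $R\in\mathcal S$, since $\mdeg(R)=0$ and $G(R)=\{1\}$. Let $J,K\in\mathcal S$ with $m=\mdeg(J)$ and $n=\mdeg(K)$. Then $X^{m+n}\in JK$, and $\mdeg(JK)=m+n$ by additivity. No monomial of degree less than $m+n$ lies in $JK$, so $X^{m+n}$ is a minimal generator of $JK$. The same argument applies to $Y^{m+n}$.

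\textbf{Step 2: $\mathcal S$ is divisor-closed.} Suppose $JK\in\mathcal S$ and set $m=\mdeg(J)$, $n=\mdeg(K)$. By \eqref{eq: factor equigen}, $X^{m+n}\in G(K_1K_2)$, so $X^{m+n}=uv$ with $u\in G_m(J)$ and $v\in G_n(K)$. The only way to split $X^{m+n}$ into a degree-$m$ and a degree-$n$ monomial is $u=X^m$, $v=X^n$. Hence $X^m\in G(J)$ and $X^n\in G(K)$, and by symmetry $Y^m\in G(J)$ and $Y^n\in G(K)$. So $J,K\in\mathcal S$.

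\textbf{Step 3: $\mathcal E(R)=\mathcal S$.} Every equigenerated $I\in\Mon_1(R)$ lies in $\mathcal S$: its generators have common degree $m$, and $\gcd=1$ forces some generator not divisible by $Y$, which must be $X^m$; likewise $Y^m\in G(I)$. By Steps 1 and 2, $\mathcal S$ is then a divisor-closed submonoid containing these ideals, so $\mathcal E(R)\subseteq\mathcal S$.

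For the reverse inclusion, take $I\in\mathcal S$ with $m=\mdeg(I)$. Every generator of $I$ is divisible by $X^a$ or $Y^b$ for some exponents, so for some $D$ all generators have degree at most $D$. The natural candidate is $\mathfrak m=\langle X,Y\rangle$: I will show that $I\mathfrak m^{k}$ is equigenerated of degree $m+k$ for large $k$, namely $I\mathfrak m^{k}=\mathfrak m^{m+k}$ once $k\ge D-m$.
\begin{itemize}
\item[] The inclusion $\subseteq$ holds because $I\subseteq\mathfrak m^m$.
\item[] For $\supseteq$, take any monomial $X^pY^q$ of degree $m+k$. If $p\ge m$, it is divisible by $X^m\cdot X^{p-m}Y^q\in I\mathfrak m^k$. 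If $q\ge m$, use $Y^m$ symmetrically.
\end{itemize}
The bivariate identity $p+q=m+k$ always allows one of these two choices. So $p\ge m$ or $q\ge m$ holds whenever $k\ge m-1$, and the bound $k\ge D-m$ is not even needed.

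Thus $I$ divides the equigenerated ideal $\mathfrak m^{m+k}\in\Mon_1(R)$, and so $I\in\mathcal E(R)$. The delicate point is only to confirm that $\mathcal E(R)$ contains every divisor of such powers. This holds by definition of the smallest divisor-closed submonoid: it consists exactly of the divisors of finite products of equigenerated ideals in $\Mon_1(R)$.
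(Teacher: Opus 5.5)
Your proof is correct and follows the paper's argument. The reverse inclusion uses the same identity $I\langle X,Y\rangle^{m-1}=\langle X,Y\rangle^{2m-1}$ (your case $k=m-1$). For the forward inclusion, the only difference is that you verify directly that $\mathcal S$ is a divisor-closed submonoid, whereas the paper notes that in two variables $\mathcal S$ coincides with the monoid $\mathcal N(R)$ of Remark~\ref{rmk: mon equigen ez}, already shown there to be divisor-closed.

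The aside about a degree bound $D$ is unnecessary, and its stated justification is a non sequitur; $D$ exists simply because $G(I)$ is finite. Since you discard it, it does no harm.
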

\begin{proof}
    Note that the set on the right-hand side is precisely the submonoid $\mathcal N(R)\subseteq \Mon_1(R)$ considered in Remark \ref{rmk: mon equigen ez}. Indeed, in two variables, $\gcd(G_m(I))=1$ if and only if $G_m(I)$ contains both $X^m$ and $Y^m$. Whence, we already know that $\mathcal E(R)\subseteq\mathcal N(R)$. 

    Let $I\in\mathcal N(R)\setminus\{R\}$ and $m=\mdeg(I)\ge 1$. To show that $I\in\mathcal E(R)$, it suffices to find a $J\in\Mon(R)$ such that $IJ$ is equigenerated and $\gcd(G(IJ))=1$. Set  $J=\langle X,Y\rangle^{m-1}=\langle X^{m-1-k}Y^k\colon k\in [0,m-1]\rangle$; we claim that $IJ=\langle X,Y\rangle^{2m-1}$. Indeed, for every monomial $X^{2m-1-k}Y^k$ with $k\in [0,2m-1]$, either $2m-1-k\ge m$ or $k\ge m$;
    as $X^m,Y^m\in G(I)$ and $G(J)$ contains all monomials 
    of degree $m-1$, this shows that $\langle X,Y\rangle^{2m-1}\subseteq IJ$. Moreover, as $\mdeg(IJ)=\mdeg(I)+\mdeg(J)=2m-1$, every monomial belonging to $IJ$ has degree at least $2m-1$, and therefore belongs to $\langle X,Y\rangle^{2m-1}$. A monomial in $IJ$ of degree $2m-1$ is actually an element of $G(IJ)$. Thus,
    we conclude that $IJ=\langle X,Y\rangle^{2m-1}$. Finally, $\langle X,Y\rangle^{2m-1}$ is equigenerated and $\gcd\bigl(G(\langle X,Y\rangle^{2m-1})\bigr)=1,$ since both $X^{2m-1}$ and $Y^{2m-1}$ are minimal generators. Consequently, $IJ\in\mathcal E(R)$, and hence $I\in\mathcal E(R)$.
\end{proof}

\begin{remark}\label{rmk: diagonal atoms}
    If $I\in\mathcal E(R)$ and $I=JK$ for some $J,K\in\Mon(R)\setminus\{R\}$, then by Proposition \ref{prop: mon equigen bivar} and the fact that $\mathcal E(R)$ is divisor-closed in $\Mon(R)$, we have $X^a,Y^a\in G(J)$ and $X^b,Y^b\in G(K)$, where $a=\mdeg(J)\ge 1$ and $b=\mdeg(K)\ge 1$. Thus, $X^aY^b,X^bY^a\in I$, and since they have degree
$\mdeg(I)=a+b$, they belong to $G(I)$. This observation provides a simple sufficient condition for an element of $\mathcal E(R)$ to be an atom of $\Mon(R)$. Let $I\in\mathcal E(R)$ with $\mdeg(I)=m$ be such that, for every $k\in [1,m-1]$, at least one of the monomials $X^k Y^{m-k},X^{m-k}Y^k$ does \emph{not} belong to $G(I)$; then $I\in\mathscr A(\Mon(R))$. In particular, if $G_m(I) = \{X^m,Y^m\}$, then $I$ is an atom.
\end{remark}

We want to compute how many atoms there are in $\mathcal E(R)$, in the following sense. For a given $m\in\mathbb N^+$, denote by $\mathfrak T_m$ and $\mathfrak T_{\le m}$ the set of all ideals in $\mathcal E(R)\setminus\{R\}$ of min-degree, respectively, equal to $m$ and at most $m$. Furthermore, set 
\[\mathfrak A_m:=\mathscr A(\Mon(R))\cap\mathfrak T_m \quad\text{ and }\quad \mathfrak A_{\le m}:=\mathscr A(\Mon(R))\cap\mathfrak T_{\le m}.\] 
Our objective is to estimate the quotient \[\delta_m:={\lvert \mathfrak A_{\le m} \rvert}/{\lvert \mathfrak T_{\le m} \rvert},\] in particular for large $m$.

\begin{lemma}\label{lem: densita equigen}
    For each $m\in\mathbb N^+$, we have
    \[ \lvert \mathfrak T_m \rvert = \frac{1}{m+1}\binom{2m}{m},\qquad\lvert \mathfrak A_m \rvert \ge \frac{1}{m}\binom{2(m-1)}{m-1}. \]
\end{lemma}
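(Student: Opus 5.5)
The plan is to encode each $I\in\mathfrak T_m$ as a lattice path and reduce both counts to Catalan numbers.

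\textbf{Step 1: ideals as lattice paths.} By Proposition~\ref{prop: mon equigen bivar}, $I\in\mathfrak T_m$ exactly when $X^m,Y^m\in G(I)$ and every minimal generator has degree at least $m$. Since $G(I)$ is an antichain containing $X^m$ and $Y^m$, we can write
\[
G(I)=\{X^{b_0}Y^{c_0},\dots,X^{b_r}Y^{c_r}\},\qquad m=b_0>b_1>\dots>b_r=0,\quad 0=c_0<c_1<\dots<c_r=m .
\]
Thus $I$ is determined by its staircase, the boundary between the exponents in $I$ and those outside it. This staircase is a lattice path in the box $[0,m]^2$ from $(0,m)$ to $(m,0)$ using unit steps $E=(1,0)$ and $S=(0,-1)$. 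Its valleys, i.e.\ an $S$-step followed by an $E$-step, together with the two endpoints, are exactly the exponents of the minimal generators. Conversely, any such path defines a monomial ideal whose generating set includes $X^m$ and $Y^m$. This gives a bijection between monomial ideals with $X^m,Y^m\in G(I)$ and $E/S$ paths from $(0,m)$ to $(m,0)$.

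\textbf{Step 2: the degree condition.} Along the path, the quantity $x+y$ increases by $1$ at each $E$-step and decreases by $1$ at each $S$-step. Hence its local minima occur exactly at valleys and endpoints, that is, at the generator exponents. So the requirement that every minimal generator have degree $\ge m$ is equivalent to the whole path staying weakly above the antidiagonal $x+y=m$. After a reflection, these are Dyck paths of semilength $m$. Therefore
\[
\lvert\mathfrak T_m\rvert=C_m=\frac1{m+1}\binom{2m}{m}.
\]
The delicate point is to get the valley/generator correspondence right, in particular that the endpoint generators $(0,m)$ and $(m,0)$ are counted and that the path touches the diagonal exactly at the generators of degree $m$. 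I would verify this carefully on small $m$, e.g.\ $m=2$ gives $\langle X^2,Y^2\rangle$ and $\langle X^2,XY,Y^2\rangle$, matching $C_2=2$.

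\textbf{Step 3: lower bound for atoms.} By Remark~\ref{rmk: diagonal atoms}, every $I\in\mathcal E(R)$ with $G_m(I)=\{X^m,Y^m\}$ is an atom. In the path model, a point of the path lies on the antidiagonal exactly when it is a generator of degree $m$. So these ideals correspond to Dyck paths that touch the diagonal only at their two endpoints. Removing the forced first and last steps from such a path leaves an arbitrary Dyck path of semilength $m-1$, so there are $C_{m-1}$ of them. Hence
\[
\lvert\mathfrak A_m\rvert\ge C_{m-1}=\frac1m\binom{2(m-1)}{m-1}.
\]
For $m=1$ this reads $1\ge 1$, witnessed by $\langle X,Y\rangle$.
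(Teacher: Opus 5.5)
Your proposal is correct and follows essentially the same route as the paper. The staircase of $G(I)$ gives a bijection between $\mathfrak T_m$ and Dyck paths of semilength $m$; you traverse it from $(0,m)$ to $(m,0)$, the paper in reverse. Remark~\ref{rmk: diagonal atoms} then reduces the atom bound to Dyck paths touching the diagonal only at the endpoints, which number $C_{m-1}$ after deleting the first and last steps.

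One small correction: the bijection in Step 1 should be with $E/S$ paths that start with an $E$-step and end with an $S$-step, since a path starting with $S$ would put $Y^{m-1}$ in the ideal. This is automatic for paths staying weakly above $x+y=m$, so your count is unaffected.
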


\begin{proof}
    Fix $m\ge 1$. Note that the ideals in $\mathfrak T_m$ bijectively correspond to the sets of exponents of their minimal generators, via the map
    \[ \mathfrak T_m\ni I \mapsto \Gamma(I) := \{(a,b)\colon X^aY^b\in G(I)\}\subseteq \mathbb N^2. \]
By Proposition \ref{prop: mon equigen bivar}, both $X^m$ and $Y^m$ belong to $G(I)$. We may therefore order the points of $\Gamma(I)$ by increasing second component as
    \[\Gamma(I)=\{(a_1,b_1),(a_2,b_2),\ldots,(a_r,b_r)\},\]
    where
    \[
        r=\mu(I), \qquad(a_1,b_1)=(m,0),
        \qquad
        (a_r,b_r)=(0,m),
    \]
    and
    \[
        a_1>a_2>\cdots>a_r,
        \qquad
        b_1<b_2<\cdots<b_r.
    \]
    Indeed, these strict inequalities follow from the fact that
    $G(I)$ is an antichain.

    For each $k\in[1,r-1]$, join $(a_k,b_k)$ first to
    $(a_k,b_{k+1})$ by a vertical segment and then
    $(a_k,b_{k+1})$ to $(a_{k+1},b_{k+1})$ by a horizontal segment.
    This produces a lattice path from $(m,0)$ to $(0,m)$ consisting of
    vertical and leftward horizontal steps. Since
    \[        a_k+b_k\ge m
        \qquad\text{for every }k\in[1,r],
    \]
    the path stays above the line segment joining $(m,0)$ and $(0,m)$, whose equation is $a+b=m$. Thus, up to a $90^{\circ}$ rotation, this is what is usually called a \emph{Dyck path of semilength $m$} \cite[Chapter 1.5]{StaCat}.

    Conversely, let $P$ be any lattice path from $(m,0)$ to $(0,m)$ consisting of upward vertical and leftward horizontal steps and staying in the region $a+b\ge m$. Besides the two endpoints, consider the vertices of $P$ at which a horizontal segment is followed by a vertical segment, and let $\Gamma(P)$ be the set consisting of these vertices together with the two endpoints. Their first coordinates strictly decrease and their second coordinates strictly increase, so the corresponding monomials form an antichain and they are precisely the minimal generators of $I_P:=\langle X^aY^b:(a,b)\in\Gamma(P)\rangle$. Moreover, every point of $\Gamma(P)$ satisfies $a+b\ge m$, while the endpoints $(m,0)$ and $(0,m)$ have degree $m$. Hence $\mdeg(I_P)=m$. Finally, since $X^m,Y^m\in G(I_P)$, Proposition~\ref{prop: mon equigen bivar} gives $I_P\in\mathcal E(R)$. Therefore $I_P\in\mathfrak T_m$ and the two constructions are inverse to each other.

\begin{figure}[ht]
\centering
\begin{tikzpicture}[scale=0.8]
  \def\m{6}

  \foreach \a in {0,...,6}{
    \foreach \b in {0,...,6}{
      \pgfmathtruncatemacro{\s}{\a+\b}
      \ifnum\s<6\relax
      \else
        \fill[black!22] (\a,\b) circle (1.45pt);
      \fi
    }
  }

  \draw[->] (-0.35,0) -- (6.8,0) node[right] {$a$};
  \draw[->] (0,-0.35) -- (0,6.8) node[above] {$b$};
  \draw[thin, black!55] (0,0) rectangle (6,6);

  \draw[dashed, black!65] (0,6) -- (6,0);
  \node[fill=white, inner sep=1pt] at (1.35,3.95) {$a+b=m$};

  \foreach \a/\b in {6/0,5/2,3/3,2/5,0/6}{
    \filldraw[black] (\a,\b) circle (2.25pt);
  }

  \draw[very thick]
    (6,0) -- (6,2) -- (5,2) -- (5,3)
    -- (3,3) -- (3,5) -- (2,5) -- (2,6) -- (0,6);

  \node[below right] at (6,0) {$(m,0)$};
  \node[above left] at (0,6) {$(0,m)$};
  \node[above right=1pt] at (5,2) {$(a_2,b_2)$};
  \node[above left=2pt] at (3,3) {$(a_k,b_k)$};
  \node[left=3pt] at (2,5) {$(a_{\mu(I)-1},b_{\mu(I)-1})$};
\end{tikzpicture}
\caption{The gray dots are the lattice points $(a,b)\in[0,m]^2$ satisfying
$a+b\ge m$, through which a Dyck path may pass. The black dots form one particular set $\Gamma(I)$: besides the endpoints, they are precisely
the vertices at which a horizontal segment is followed by a vertical
one. The thick line is its associated Dyck path.}
\end{figure}

Hence, the elements of $\mathfrak T_m$ are in bijection with the Dyck paths of semilength $m$. The number of such paths, and thus $\lvert \mathfrak T_m \rvert$, equals the $m$-th \emph{Catalan number} (cf. \cite[Theorem 1.5.1(vi)]{StaCat} or \cite[Exercise 6.19(i)]{Stanley}), i.e.,
    \[ \lvert \mathfrak T_m \rvert = C_m := \frac{1}{m+1}\binom{2m}{m}. \]

   In order to bound $\lvert \mathfrak A_m \rvert$ from below, we use Remark \ref{rmk: diagonal atoms}: if $I\in\mathfrak T_m$ is such that $G_m(I)=\{X^m,Y^m\}$, then $I\in\mathfrak A_m$. The Dyck paths corresponding to such ideals are those that touch the diagonal from $(m,0)$ to $(0,m)$ only at the endpoints. 

    Indeed, any interior point at which the path meets the diagonal must be reached by a horizontal step and followed by a vertical step, since the path is contained in the region $a+b\ge m$. Hence it belongs to $\Gamma(P)$ and corresponds to a minimal generator of degree $m$. 
   
   For $m\ge 2$, these paths in turn correspond (by cutting the segments from $(m,0)$ to $(m,1)$ and from $(1,m)$ to $(0,m)$) to lattice paths from $(m,1)$ to $(1,m)$ staying above the line segment between these two points, i.e., Dyck paths of semilength $m-1$. Thus, we obtain
    \[ \lvert \mathfrak A_m \rvert \ge C_{m-1} = \frac{1}{m}\binom{2(m-1)}{m-1}.\]
    The above estimate also holds for $m=1$, as $\langle X,Y\rangle\in\mathfrak A_1$ and so $\lvert \mathfrak A_1 \rvert\ge C_0=1$.
\end{proof}

\begin{proposition}
    For every $m\in\mathbb N^+$, we have $\delta_m\ge 1/4$.
\end{proposition}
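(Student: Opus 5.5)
The plan is to use Lemma~\ref{lem: densita equigen} termwise. Since $\mathfrak T_{\le m}$ is the disjoint union of the $\mathfrak T_k$, $k\in[1,m]$, and similarly for $\mathfrak A_{\le m}$, we have
\[
\delta_m=\frac{\sum_{k=1}^m \lvert\mathfrak A_k\rvert}{\sum_{k=1}^m\lvert\mathfrak T_k\rvert}\ \ge\ \frac{\sum_{k=1}^m C_{k-1}}{\sum_{k=1}^m C_k}.
\]
A mediant-type argument then finishes the proof: if $C_{k-1}\ge \tfrac14 C_k$ for every $k\ge 1$, then summing gives $\sum_k C_{k-1}\ge \tfrac14\sum_k C_k$, hence $\delta_m\ge 1/4$.

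The key step is therefore the elementary Catalan ratio bound $C_k/C_{k-1}\le 4$. I would derive it from the closed form $C_k=\frac{1}{k+1}\binom{2k}{k}$, which gives the standard recurrence
\[
\frac{C_k}{C_{k-1}}=\frac{(2k)(2k-1)}{k\cdot k}\cdot\frac{k}{k+1}=\frac{2(2k-1)}{k+1}=4-\frac{6}{k+1}<4 .
\]
This holds for all $k\ge1$; for $k=1$ it reads $C_1/C_0=1$. Consequently $\lvert\mathfrak A_k\rvert\ge C_{k-1}> \tfrac14 C_k=\tfrac14\lvert\mathfrak T_k\rvert$ for every $k$.

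There is no real obstacle here. The only care needed is to confirm that the sets $\mathfrak T_k$ for different $k$ are indeed disjoint, which is immediate since min-degree is well defined, and that the bound of Lemma~\ref{lem: densita equigen} for $\lvert\mathfrak A_k\rvert$ is valid for $k=1$, which the lemma already addresses. One could also remark that the inequality is strict, and that the termwise ratio $C_{k-1}/C_k$ tends to $1/4$, so this method cannot improve the constant asymptotically.
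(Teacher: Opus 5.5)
Your proof is correct and follows essentially the same route as the paper: combine $\lvert\mathfrak T_k\rvert=C_k$ and $\lvert\mathfrak A_k\rvert\ge C_{k-1}$ from Lemma~\ref{lem: densita equigen} with the Catalan ratio bound $C_{k-1}/C_k=\frac{k+1}{2(2k-1)}\ge\frac14$, and then sum over $k\in[1,m]$. The paper writes this ratio in exactly that reciprocal form, so the two arguments coincide.
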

\begin{proof}
    By Lemma \ref{lem: densita equigen}, for every $m\ge 1$ we have
    \[ \lvert \mathfrak T_m \rvert = C_m,\qquad\lvert \mathfrak A_m \rvert \ge C_{m-1}, \]
    where $C_m=\frac{1}{m+1}\binom{2m}{m}$. It follows that, for every $m\ge 1$,
    \[ \lvert \mathfrak A_m \rvert \ge \frac{(2m-2)!}{m(m-1)!(m-1)!} = \frac{m(m+1)}{2m(2m-1)}\cdot\frac{(2m)!}{(m+1)(m!)^2} = \frac{m+1}{2(2m-1)}\lvert \mathfrak T_m \rvert \ge \frac{\lvert \mathfrak T_m \rvert}{4}. \]
    Hence,
    \[ \lvert \mathfrak A_{\le m}\rvert = \sum_{k=1}^m \lvert \mathfrak A_k\rvert \ge \frac{1}{4}\sum_{k=1}^m \lvert \mathfrak T_k \rvert = \frac{\lvert \mathfrak T_{\le m} \rvert}{4},\]
    and therefore
    \[ \delta_m = \frac{\lvert \mathfrak A_{\le m} \rvert}{\lvert \mathfrak T_{\le m} \rvert} \ge \frac{1}{4}. \]
\end{proof}

\section*{Declaration of generative AI and AI-assisted technologies in the manuscript preparation process}
Figure~1 was constructed with the assistance of Claude Free. ChatGPT (GPT-5.6 Sol) was used for language polishing, improving the presentation of the manuscript, and assisting with bibliographic searches. All AI-assisted content was reviewed and verified by the authors. The authors take full responsibility for the mathematical results, arguments, conclusions, and overall content of the manuscript.

\end{document}